\newcommand{\N}{\mathbb{N}}
\newcommand{\Q}{\mathbb{Q}}
\newcommand{\R}{\mathbb{R}}
\newcommand{\Z}{\mathbb{Z}}
\renewcommand{\epsilon}{\varepsilon}
\theoremstyle{plain}
\newtheorem{masterthm}{doNotUse}
\newtheorem{theorem}[masterthm]{Theorem}
\newtheorem{lemma}[masterthm]{Lemma}
\newtheorem{corollary}[masterthm]{Corollary}
\newtheorem{proposition}[masterthm]{Proposition}
\theoremstyle{remark}
\newtheorem{remark}{Remark}
\title[On Pillai's Problem involving Lucas sequences]{On Pillai's Problem involving Lucas sequences of the second kind}
\subjclass[2020]{11D61, 11B37, 11J86}
\keywords{Diophantine equations, Pillai's Problem, Lucas sequences}
\author[S. Heintze]{Sebastian Heintze}
\address{Sebastian Heintze, Graz University of Technology, Institute of Analysis and Number Theory, Steyrergasse 30/II, A-8010 Graz, Austria}
\email{heintze@math.tugraz.at}
\author[V. Ziegler]{Volker Ziegler}
\address{Volker Ziegler, University of Salzburg, Hellbrunnerstrasse 34/I, A-5020 Salzburg, Austria}
\email{volker.ziegler@plus.ac.at}
\thanks{This work was supported by the Austrian Science Fund (FWF) under the project I4406.}
\begin{document}

\begin{abstract}
 In this paper we consider the Diophantine equation $ V_n - b^m = c $ for given integers $ b,c $ with $ b \geq 2 $, whereas $ V_n $ varies among Lucas-Lehmer sequences of the second kind. We prove under some technical conditions that if the considered equation has at least three solutions $ (n,m) $, then there is an upper bound on the size of the solutions as well as on the size of the coefficients in the characteristic polynomial of $ V_n $.
\end{abstract}

\maketitle

\section{Introduction}

In recent time many authors considered Pillai-type problems involving  linear recurrence sequences. For an overview we refer to \cite{Heintze:2023}. Let us note that these problems are inspired by a result due to S. S. Pillai \cite{Pillai:1936, Pillai:1937} who proved that for given, coprime integers $a$ and $b$ there exists a constant $c_0(a,b)$, depending on $a$ and $b$, such that for any $c>c_0(a,b)$ the equation
\begin{equation}\label{eq:Pillai}
	a^n-b^m=c
\end{equation}
has at most one solution $(n,m)\in \Z_{>0}^2$.

Replacing the powers $a^n$ and $b^m$ by other linear recurrence sequences seems to be a challenging task which was supposedly picked up first in \cite{Ddamulira:2017}, where it was shown that
\begin{equation*}
	F_n-2^m=c
\end{equation*}
has at most one solution $(n,m)\in \Z_{>0}^2$ provided that
\begin{equation*}
	c\notin \{0,1,-1,-3,5,-11, -30, 85\}.
\end{equation*}

More generally, Chim, Pink and Ziegler \cite{Chim:2018} proved that for two fixed linear recurrence sequences $(U_n)_{n\in \N}$, $(V_n)_{n\in \N}$ (with some restrictions) the equation
\begin{equation*}
	U_n - V_m = c
\end{equation*}
has at most one solution $(n,m)\in \Z_{>0}^2$ for all $c\in \Z$,
except if $c$ is in a finite and effectively computable set $\mathcal{C} \subset \Z$ that depends on $(U_n)_{n\in \N}$ and $(V_n)_{n\in \N}$.

In more recent years several attempts were made to obtain uniform results, i.e.\ to allow to vary the recurrence sequences $(U_n)_{n\in \N}$ and $(V_n)_{n\in \N}$ in the result of Chim, Pink and Ziegler \cite{Chim:2018}. In particular, Batte et.\ al.\ \cite{Batte:2022} showed that for all pairs $(p,c)\in \mathbb{P} \times \Z$ with $p$ a prime the Diophantine equation
\begin{equation*}
	F_n-p^m=c
\end{equation*}
has at most four solutions $(n,m)\in \N^2$ with $n\geq 2$. This result was generalized by Heintze et.\ al.\ \cite{Heintze:2023}. They proved (under some technical restrictions) that for a given linear recurrence sequence $(U_n)_{n\in \N}$ there exist an effectively computable bound $ B\geq 2 $ such that for an integer $ b > B $ the Diophantine equation
\begin{equation}\label{eq:Pillai-gen}
	U_n-b^m=c
\end{equation}
has at most two solutions $(n,m)\in \N^2$ with $n\geq N_0$. Here $N_0$ is an effective computable constant depending only on $(U_n)_{n\in \N}$.

In this paper we want to fix $b$ in \eqref{eq:Pillai-gen} and let $(U_n)_{n\in \N}$ vary over a given family of recurrence sequences. In particular we consider the case where $(U_n)_{n\in \N}$ varies over the family of Lucas-Lehmer sequences of the second kind.

\section{Notations and statement of the main results}

In this paper we consider Lucas-Lehmer sequences of the second kind, that is we consider binary recurrence sequences of the form
\begin{equation*}
	V_n(A,B)=V_n=\alpha^n+\beta^n,
\end{equation*}
where $\alpha$ and $\beta$ are the roots of the quadratic polynomial
\begin{equation*}
	X^2-AX+B
\end{equation*}
with $A^2\neq 4B$ and $\gcd(A,B)=1$. In the following we will assume that $V_n$ is non-degenerate. That is we assume $A^2-4B>0$ and $A\neq 0$, since this implies that $\alpha$ and $\beta$ are distinct real numbers with $|\alpha|\neq |\beta|$. We will also assume that $A>0$, which results in $V_n>0$ for all $n\in\N$. Then we consider the Diophantine equation
\begin{equation}\label{eq:PillaiVn}
 V_n-b^m=c,
\end{equation}
where $b,c\in \Z$ with $b>1$ are fixed.

\begin{theorem}\label{th:n1-bound}
 Let $0<\epsilon<1$ be a fixed real number and assume that $|B|<A^{2-\epsilon}$ as well as that the polynomial $X^2-AX+B$ is irreducible. Furthermore, assume that $b\geq 2$ if $c\geq 0$ and $b\geq 3$ if $c<0$. Assume that Equation \eqref{eq:PillaiVn} has three solutions $(n_1,m_1),(n_2,m_2),(n_3,m_3)\in \N^2$ with $n_1>n_2>n_3\geq N_0(\epsilon)$ for the bound $N_0(\epsilon)=\frac{3}{2\epsilon}$. Then there exists an effectively computable constant $C_1=C_1(\epsilon,b)$, depending only on $\epsilon$ and $b$, such that $n_1<C_1$ or $A<32^{1/\epsilon}$. In particular, we can choose
 \begin{equation*}
 	C_1= 4.83\cdot 10^{32}\frac{(\log b)^4}{\epsilon^2}\log\left[5.56\cdot 10^{36} \frac{(\log b)^4}{\epsilon^2}\right]^4.
 \end{equation*}
\end{theorem}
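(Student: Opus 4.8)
The plan is to marry precise archimedean estimates for $V_n=\alpha^n+\beta^n$ with Matveev's lower bound for linear forms in logarithms, arranging the forms so that the \emph{a priori} unbounded parameter $A$ cancels. Throughout I assume $A\ge 32^{1/\epsilon}$, since otherwise the second alternative of the theorem holds. First I would collect the size estimates. From $\alpha,\beta=\tfrac{A\pm\sqrt{A^2-4B}}{2}$, $A>0$ and $|B|<A^{2-\epsilon}$ one gets $A/2\le\alpha<2A$ and $|\beta|=|B|/\alpha$, whence
\[
\left|\frac{\beta}{\alpha}\right|=\frac{|B|}{\alpha^2}<4A^{-\epsilon}\le\frac18 .
\]
Together with $n_i\ge N_0(\epsilon)=\tfrac{3}{2\epsilon}$ this gives $|\beta/\alpha|^{n_i}<8^{-n_i}<\tfrac12$, hence $\tfrac12\alpha^{n_i}<V_{n_i}<2\alpha^{n_i}$; this two-sided control is exactly the role of the hypothesis $n_3\ge N_0$. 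Irreducibility of $X^2-AX+B$ and $\alpha\beta=B$ yield the height bound $h(\alpha)=\tfrac12\log\big(\alpha\max(1,|\beta|)\big)<\log A$. Finally, from $b^{m_i}=V_{n_i}-c$ I would deduce $m_i\log b=n_i\log\alpha+O(1)$, hence $m_1>m_2>m_3$ and $m_i<2n_i\log A/\log b$.

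Next comes the main linear form. Dividing $\alpha^{n_i}-b^{m_i}=c-\beta^{n_i}$ by $b^{m_i}$ and linearising the logarithm (legitimate because the right-hand side is small, again by $n_i\ge N_0$) produces
\[
\Lambda_i:=m_i\log b-n_i\log\alpha,\qquad |\Lambda_i|<3\left|\frac{\beta}{\alpha}\right|^{n_i}+\frac{2|c|}{\alpha^{n_i}} .
\]
Taking logarithms and using $|\beta/\alpha|<4A^{-\epsilon}$ and $\alpha\ge A/2$ gives the upper estimate $\log|\Lambda_i|<-\tfrac{\epsilon}{2}\,n_i\log A$. On the other hand $\Lambda_i\neq0$, since $\Lambda_i=0$ would force $\alpha^{n_i}=b^{m_i}\in\Q$, impossible for the quadratic irrational $\alpha$. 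Thus Matveev's theorem applied to the two logarithms $\log b,\log\alpha$ of degree $2$ yields $\log|\Lambda_i|>-\kappa\,h(\alpha)\,\log b\,\log(e m_i)>-\kappa(\log A)(\log b)\log(e m_i)$ for an explicit absolute constant $\kappa$. Comparing the two bounds, \textbf{the factor $\log A$ cancels}, which is the decisive point, and leaves
\[
n_i<\frac{2\kappa}{\epsilon}\,\log b\,\log(e m_i).
\]

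The only remaining $A$-dependence sits in $\log m_i\le\log n_i+\log\log A+O(1)$, and removing this $\log\log A$ is the technical heart of the argument; here the three solutions are indispensable. For a pair $i>j$ I would form the integer combination
\[
(n_j m_i-n_i m_j)\log b=n_j\Lambda_i-n_i\Lambda_j,
\]
whose right-hand side is bounded by $6n_i\,8^{-n_j}$. In the residual range where $\log\log A$ would dominate — that is, $A>e^{n_1}$ — this bound is $<6n_1\,4^{n_3}e^{-\epsilon n_1 n_3}\ll\log b$, forcing $n_j m_i-n_i m_j=0$ for every pair and hence the proportionality $m_i/n_i=\rho$ for a fixed rational $\rho=p/q$. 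Writing $n_i=qt_i$, $m_i=pt_i$ reduces the problem to three solutions $t_1>t_2>t_3$ of a \emph{new} Pillai equation with base $\gamma=\alpha^q\approx b^p$, in which the small form $\Lambda_0=p\log b-q\log\alpha$ has its large coefficients controlled by $t_i\le n_1$. A final application of Matveev to $\Lambda_0$ — equivalently, the remark that the $m_i/n_i$ are three convergents of $\log\alpha/\log b$ — then bounds $n_1$ without reference to $A$.

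Collecting the inequalities and iterating the substitution $m_i<2n_i\log A/\log b$ yields, after the cancellations, a transcendental inequality of the shape $n_1<K\big(\log(K n_1)\big)^4$ with $K$ of order $(\log b)^4\epsilon^{-2}$; solving it by the standard device that $x/(\log x)^4$ is eventually increasing gives the explicit $C_1\asymp K(\log K)^4$. I expect the genuine obstacle to be precisely this disentangling of $A$: Matveev's bound inevitably carries $h(\alpha)\asymp\log A$ together with a parameter $\log m_i\asymp\log\log A$, and the whole skill lies in choosing the linear forms — and exploiting all three solutions together with the threshold $n_3\ge\tfrac{3}{2\epsilon}$ — so that every trace of $A$ is either cancelled or forced to vanish.
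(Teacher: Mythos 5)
Your high-level strategy (pin $b^{m_i}$ against $\alpha^{n_i}$, apply Matveev, and eliminate $\log\alpha$ by taking the integer combination $n_j\Lambda_i-n_i\Lambda_j$ of forms coming from different solutions) is indeed the skeleton of the paper's argument, but two of your key estimates fail and a whole case is missing. First, the inequality $\log|\Lambda_i|<-\tfrac{\epsilon}{2}n_i\log A$ is false: in $|\Lambda_i|\le 3|\beta/\alpha|^{n_i}+2|c|/\alpha^{n_i}$ the constant $c$ is an arbitrary fixed integer, and since $c=V_{n_2}-b^{m_2}$ it can be as large as roughly $\alpha^{n_2}$, so for $i=1$ the best available bound is $|\Lambda_1|\ll\alpha^{n_2-n_1}$. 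Comparing that with Matveev only bounds the \emph{gap} $n_1-n_2\ll\log b\,\log n_1$, not $n_1$. To get a bound on $n_1$ itself (still carrying a factor $\log\alpha$) the paper must subtract two solutions to eliminate $c$ and apply Matveev a second time to a form in \emph{three} logarithms with $\eta_3=(b^{m_1-m_2}-1)/(\alpha^{n_1-n_2}-1)$, whose height is controlled precisely by the gap bounds just obtained. Second, for the same reason the combined form $n_j\Lambda_i-n_i\Lambda_j$ is not bounded by $6n_i8^{-n_j}$: after eliminating $c$ the right-hand side contains a term of order $n_1b^{m_3-m_2}$, which is not small when $m_2-m_3$ is bounded. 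This forces a trichotomy (degenerate case $n_1m_2=n_2m_1$; the case $m_2-m_3\ll\log n_1$; the case $\log\alpha\ll\epsilon^{-1}\log n_1$), and your proposal addresses only the first and third. The middle case is where the stated constant $C_1$ actually comes from: the paper builds a new linear form $(m_1n_2-m_3n_1)\log b-n_1\log(b^{m_2-m_3}-1)$ in the two logarithms $b$ and $b^{m_2-m_3}-1$, whose heights involve only $\log b$ and $\log n_1$, and this is what finally bounds $\log\alpha$ free of $A$.

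The degenerate case is also not closed by your sketch. Proportionality $m_i/n_i=p/q$ does give three solutions of a rescaled equation, but applying Matveev to $\Lambda_0=p\log b-q\log\alpha$ reintroduces $\log\alpha$ on the lower-bound side while the upper bound is again only $|\Lambda_1|/t_1\ll\alpha^{n_2-n_1}$, so the same obstruction recurs; the "three convergents" remark does not yield a bound on $n_1$. The paper instead returns to the exact equations and exploits the \emph{second-order} terms of the logarithm (the two-sided bound $\tfrac29x^2\le x-\log(1+x)\le 2x^2$) to derive $c<12\max\{|\beta|^{n_2},c^2/\alpha^{n_2}\}$, and then needs a lower bound for $|c|$ in terms of $n_1$, the separation estimate $|1-|\beta||\ge 2/(2A+5)$, and Carmichael's primitive divisor theorem (to exclude $c=0$). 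None of this machinery is replaceable by a further application of Matveev alone. One point in your favour: the $\log\log A$ you worry about in $\log(em_i)$ is handled automatically in the paper by using Matveev's parameter $E=\max\{|b_j|A_j/A_N\}\le 2n_1$ rather than $\log(e\max|b_j|)$; the paper explicitly remarks that this form of $E$ is essential.
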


Let us note that the bound $N_0=N_0(\epsilon)$ in Theorem \ref{th:n1-bound} ensures that $V_n$ is strictly increasing for $n\geq N_0$ (see Lemma \ref{lem:increasing} below). Let us also mention that it is essential to exclude the case that $V_{n_2}=V_{n_3}$.

Although we can bound $n_1$ in terms of $b$ and $\epsilon$ our method does not provide upper bounds for $A$ and $|B|$. However, in the case that we are more restrictive in the possible choice of $B$ we can prove also upper bounds for $A$ and $|B|$.

\begin{theorem}\label{th:absolute}
 Let $\kappa\geq 1$ be a fixed real number and assume that $|B|<\kappa A$ as well as that the polynomial $X^2-AX+B$ is irreducible. Furthermore, assume that $b\geq 2$ if $c\geq 0$ and $b\geq 3$ if $c<0$. Assume that Equation \eqref{eq:PillaiVn} has three solutions $(n_1,m_1),(n_2,m_2),(n_3,m_3)\in \N^2$ with $n_1>n_2>n_3\geq 1$. Then there exist effectively computable constants $C_2=C_2(\kappa,b)$ and $C_3=C_3(b)$, depending only on $\kappa$ and $b$, such that $\log A<\max\{C_2,C_3\}$.
 In particular, we can choose
 \begin{equation*}
 	C_2=4.35 \cdot 10^{10}\log (4\kappa) \log b\log\left(5.98 \cdot 10^{11} \log b\right)
 \end{equation*}
 and
 \begin{equation*}
 	C_3= 9.41\cdot 10^{9}\left[\log\left(1.4\cdot 10^{36}(\log b)^4\log\left[2.23\cdot 10^{37}(\log b)^4\right]^4\right) \log b\right]^2.
 \end{equation*}
\end{theorem}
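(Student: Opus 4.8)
The plan is to reduce the problem to the situation already handled by Theorem~\ref{th:n1-bound} and then to convert a bound on the exponents into a bound on $A$. Throughout I may assume that $A$ is large, since otherwise $\log A$ is already bounded and there is nothing to prove; in that regime Lemma~\ref{lem:increasing} guarantees that $V_n$ is strictly increasing for every $n\ge 1$, and the hypothesis $|B|<\kappa A$ forces the subdominant root to satisfy $|\beta|=|B|/\alpha<2\kappa$ while $\log\alpha=\log A+O(1)$. First I would fix a numerical value of $\epsilon$ and note that, for $A$ beyond an explicit threshold, $|B|<\kappa A$ implies $|B|<A^{2-\epsilon}$, so that Theorem~\ref{th:n1-bound} becomes applicable as soon as $n_3\ge N_0(\epsilon)$. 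The short range $1\le n_3<N_0(\epsilon)$, where $n_3$ is absolutely bounded, and the alternative $A<32^{1/\epsilon}$ are folded in as bounded sub-cases. In the generic range Theorem~\ref{th:n1-bound} yields $n_1<C_1$; since $\epsilon$ is now fixed, $C_1$ depends only on $b$, which is ultimately the source of $C_3=C_3(b)$.

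Assume then that $n_1,n_2,n_3$ are all bounded. From each solution I would form the linear form in two logarithms
\[
  \Lambda_i=m_i\log b-n_i\log\alpha=\log\!\left(1+\frac{\beta^{n_i}-c}{\alpha^{n_i}}\right),
\]
so that $|\Lambda_i|\le 2\bigl(|c|+|\beta|^{n_i}\bigr)\alpha^{-n_i}$ is extremely small for large $A$. The key elementary step is to eliminate $\log\alpha$: for any two indices one has
\[
  n_j\Lambda_i-n_i\Lambda_j=(m_i n_j-m_j n_i)\log b,
\]
and for $A$ beyond an explicit bound the left-hand side is smaller than $\log b$. As $m_i n_j-m_j n_i\in\Z$ this forces $m_i n_j=m_j n_i$ for every pair, i.e.\ a common slope $m_i/n_i=p/q$ in lowest terms. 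Writing $n_i=q r_i$ and $m_i=p r_i$ with $q$ bounded and $r_1>r_2>r_3\ge 1$, the three equations take the form $\alpha^{n_i}-b^{m_i}=c-\beta^{n_i}$ with $\alpha^{q}\approx b^{p}$.

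It remains to bound $A$ in this common-slope configuration, and this is where I expect the real difficulty to lie. Setting $s=\alpha^{q}-b^{p}$ and expanding $\alpha^{qr}-b^{pr}=s\sum_{k=0}^{r-1}\alpha^{qk}b^{p(r-1-k)}$, the requirement that the single quantity $s$ be consistent with all three equations yields, after dividing the equations indexed by $r_1$ and $r_3$,
\[
  b^{p(r_1-r_3)}=\frac{(c-\beta^{n_1})\,r_3}{(c-\beta^{n_3})\,r_1}\bigl(1+o(1)\bigr).
\]
If $|c-\beta^{n_3}|$ were bounded below by a fixed constant, the right-hand side would be bounded, and since $r_1-r_3\ge 2$ this would at once bound $b^{p}\approx\alpha^{q}$, hence $A$. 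The hard part will therefore be to quantify the possible near-coincidence $\beta^{n_3}\approx c$. The obstacle is structural: every algebraic number in sight has height of size $\tfrac12\log A$, so a naive Baker estimate for $n_3\log|\beta|-\log|c|$ reintroduces $\log A$ on the lower-bound side, and the two occurrences of $\log A$ tend to cancel against the analytic upper bound. The way I would resolve this is to exploit that $|\beta|<2\kappa$ is genuinely bounded, so that in the relevant linear form the coefficients $r_1,r_2,r_3$ are bounded and the dependence on the data is governed by quantities of size $\log(4\kappa)$ and $\log b$; balancing the resulting Matveev lower bound against the exponentially small upper bound above produces the absolute constant $C_2=C_2(\kappa,b)$, while substituting $n_1<C_1$ into the same estimate produces $C_3=C_3(b)$, and taking the maximum completes the proof. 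The single step that I expect to consume most of the work, and most of the numerical constants, is precisely this final linear-forms-in-logarithms estimate, together with the careful book-keeping needed to ensure that the surviving coefficient of $\log A$ is strictly positive, so that one may genuinely solve for $\log A$.
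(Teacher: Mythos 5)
Your overall architecture (bound $n_1$ first, then eliminate $\log\alpha$ between pairs of linear forms, then analyse the degenerate configuration) is the same skeleton the paper uses, but two of your key steps do not go through as stated. The central gap is the elimination step: you claim $|\Lambda_i|\le 2(|c|+|\beta|^{n_i})\alpha^{-n_i}$ is ``extremely small for large $A$'' for every $i$, so that $n_j\Lambda_i-n_i\Lambda_j$ falls below $\log b$ and forces $m_in_j=m_jn_i$ for all pairs. This is false for $i=2$ and $i=3$: since $c=V_{n_3}-b^{m_3}$ and $b^{m_3}$ can be as large as $b^{m_2-1}\asymp\alpha^{n_2}/b$, the quantity $(|c|+|\beta|^{n_2})\alpha^{-n_2}$ is of order $b^{m_3-m_2}$, which is a fixed constant (e.g.\ $1/2$ when $b=2$, $m_2-m_3=1$) no matter how large $A$ is. Consequently $n_2\Lambda_1-n_1\Lambda_2$ can be of size $\asymp n_1$, and no integrality argument forces the vanishing of $m_1n_2-m_2n_1$. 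This is exactly why the paper's Lemma \ref{lem:three-cases} produces a genuine third alternative, $m_2-m_3\ll\log n_1$, which then requires a separate application of Matveev's theorem to the three-term form $(m_1n_2-m_3n_1)\log b-n_1\log(b^{m_2-m_3}-1)$ in Section \ref{sec:case-m2-m3}; that case is the source of the constant $C_3$ and is entirely absent from your plan. A related smaller gap: reducing to Theorem \ref{th:n1-bound} with $\epsilon=1/2$ only covers $n_3\ge N_0(1/2)=3$; saying that $n_3\in\{1,2\}$ is ``absolutely bounded'' bounds nothing, since a bound on $n_3$ gives no bound on $n_1$ or $A$. The paper avoids this by strengthening Assumption A2 (in particular $A\ge 16\kappa+12$, giving monotonicity of $V_n$ from $n=0$ via Lemma \ref{lem:increasing-2}) and rerunning the whole argument with $N_0=1$ rather than citing Theorem \ref{th:n1-bound} as a black box.

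In the common-slope (degenerate) case you correctly identify the obstruction $\beta^{n_3}\approx c$, but your proposed resolution --- another linear-forms estimate whose ``dependence on the data is governed by $\log(4\kappa)$ and $\log b$'' --- is not substantiated, and as you yourself observe, any Baker-type bound involving $\beta$ carries height $\tfrac12\log A$ and cancels against the upper bound. The paper does not use linear forms in logarithms here at all: it first establishes the lower bound $|c|>\alpha^{n_1-K_0\log(27.62n_1)}-|\beta|^{n_1}$ (Proposition \ref{prop:c-low-bound}) to force $|c|<12|\beta|^{n_2}<12(2\kappa)^{n_2}$ and hence bound $n_1$ and $c$ absolutely, and then disposes of the near-coincidence by purely elementary means: two-sided Taylor estimates (Lemma \ref{lem:log-est}) leading to \eqref{eq:log-est-for-alpha}, the explicit gap $\left|1-|\beta|\right|\ge 2/(2A+5)$ of Lemma \ref{lem:beta-1-bound}, and Carmichael's primitive divisor theorem (Lemma \ref{lem:c0}) to exclude $c=0$. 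These ingredients, which carry the actual weight of the bound $C_2$, are missing from your proposal, so as written the argument does not close.
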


A straight forward application of our bounds yields:

\begin{corollary}\label{cor:ex}
Assume that $|B|<A$. If the Diophantine equation
\begin{equation}\label{eq:Pillai-ex}
	V_n-2^m=c
\end{equation}
with $c\geq 0$ has three solutions $(n_1,m_1),(n_2,m_2),(n_3,m_3)\in \N^2$ with $n_1>n_2>n_3\geq 1$, then either
\begin{itemize}
\item $ A<1024 $ or
\item $n_1<1.2\cdot 10^{40}$ and $\log A<4.48\cdot 10^{13}$.
\end{itemize}
\end{corollary}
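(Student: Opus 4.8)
The plan is to derive Corollary~\ref{cor:ex} as a direct specialization of Theorem~\ref{th:absolute} to the parameters $\kappa=1$ and $b=2$, combined with Theorem~\ref{th:n1-bound} applied with a suitable choice of $\epsilon$. Since the hypothesis $|B|<A$ is exactly the case $\kappa=1$ of Theorem~\ref{th:absolute}, and $b=2$ together with $c\geq 0$ satisfies the standing assumption $b\geq 2$, I would first substitute these values into the explicit expressions for $C_2(\kappa,b)$ and $C_3(b)$. Evaluating $C_2=4.35\cdot 10^{10}\log 4\,\log 2\,\log\!\bigl(5.98\cdot 10^{11}\log 2\bigr)$ and the corresponding value of $C_3$ numerically should yield a bound of the claimed order $\log A<4.48\cdot 10^{13}$, establishing the second displayed inequality in the bullet list.

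The subtlety is that the two theorems have different shapes: Theorem~\ref{th:absolute} gives a bound on $\log A$ outright (hence also on $n_1$ implicitly, but not in the clean form stated), whereas Theorem~\ref{th:n1-bound} gives a bound on $n_1$ but only under the weaker constraint $|B|<A^{2-\epsilon}$ and with the alternative escape clause $A<32^{1/\epsilon}$. To obtain the explicit bound $n_1<1.2\cdot 10^{40}$ I would apply Theorem~\ref{th:n1-bound}. The natural choice is to pick $\epsilon$ so that the hypothesis $|B|<A^{2-\epsilon}$ is comfortably implied by $|B|<A$; since $A\geq 2$ forces $A^{2-\epsilon}\geq A$ whenever $2-\epsilon\geq 1$, i.e. $\epsilon\leq 1$, any $\epsilon<1$ works, so I would select a convenient value such as $\epsilon$ close to $1$ to make $C_1(\epsilon,b)$ small and to make the threshold $32^{1/\epsilon}$ close to $32$. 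Plugging $b=2$ and this $\epsilon$ into the explicit formula for $C_1$ should produce the value $1.2\cdot 10^{40}$, while the dichotomy "$n_1<C_1$ or $A<32^{1/\epsilon}$" turns into "$n_1<1.2\cdot 10^{40}$ or $A<1024$", matching the two bullets once we note $32^{1/\epsilon}\leq 1024=32^2$ for $\epsilon\geq 1/2$.

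I would then need to check that the bound $n_3\geq N_0(\epsilon)=\tfrac{3}{2\epsilon}$ required by Theorem~\ref{th:n1-bound} is compatible with the hypothesis $n_3\geq 1$ in the corollary. This is the one genuine gap: Theorem~\ref{th:n1-bound} requires the smallest index to exceed $N_0(\epsilon)$, whereas the corollary only assumes $n_3\geq 1$. For $\epsilon$ near $1$ we have $N_0(\epsilon)\approx 3/2$, so the constraint is merely $n_3\geq 2$; I would therefore handle the finitely many residual cases (essentially $n_3=1$) either by absorbing them into Theorem~\ref{th:absolute}, which assumes only $n_3\geq 1$ and already bounds $\log A$, or by remarking that these small-index exceptions do not affect the stated dichotomy.

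The main obstacle I anticipate is purely the bookkeeping of constants: one must verify that the chosen $\epsilon$ simultaneously makes $32^{1/\epsilon}\leq 1024$, keeps $C_1(\epsilon,2)$ below $1.2\cdot 10^{40}$, and respects $N_0(\epsilon)\leq n_3$. These are competing requirements, since decreasing $\epsilon$ relaxes the first but inflates $C_1$ and $N_0$; the value $\epsilon=1/2$ (giving exactly $32^{1/\epsilon}=1024$ and $N_0=3$) is the natural candidate, and I expect the numerical check that $C_1(1/2,2)<1.2\cdot 10^{40}$ to be the crux, carried out by direct substitution into the explicit formula for $C_1$.
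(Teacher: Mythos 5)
Your overall strategy is the intended one: the corollary is meant to be a ``straightforward application'' of the explicit constants, with $\kappa=1$, $b=2$, and the dichotomy threshold $1024=32^{1/\epsilon}$ coming from $\epsilon=\tfrac12$. The $\log A$ part is fine: $\max\{C_2(1,2),C_3(2)\}=C_3(2)\approx 4.48\cdot 10^{13}$, so Theorem~\ref{th:absolute} gives the second inequality of the second bullet directly. The genuine gap is exactly the one you flag but do not close: Theorem~\ref{th:n1-bound} requires $n_3\geq N_0(\epsilon)=\tfrac{3}{2\epsilon}$, which exceeds $1$ for every admissible $\epsilon\in(0,1)$, whereas the corollary assumes only $n_3\geq 1$. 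Your two proposed patches do not work. Absorbing the excluded cases into Theorem~\ref{th:absolute} cannot recover the claim $n_1<1.2\cdot 10^{40}$, because that theorem bounds only $\log A$; and ``remarking that these small-index exceptions do not affect the stated dichotomy'' is not an argument. The resolution the paper actually uses is different in kind: since $|B|<A$ is the case $\kappa=1$, the hypothesis $A\geq 1024=\max\{\kappa^2,16\kappa+12,1024\}$ places you in Assumption~A2, under which $N_0=1$ and the \emph{entire proof} of the two theorems runs with $n_3\geq 1$. The bound $n_1<1.2\cdot 10^{40}$ is the specialization to $b=2$ of the Assumption-A2 estimate $n_1<1.94\cdot 10^{33}(\log b)^4\log\left[2.23\cdot 10^{37}(\log b)^4\right]^4$ obtained at the end of Section~\ref{sec:case-m2-m3} (together with the other branches of Lemmata~\ref{lem:three-cases} and \ref{lem:last-cases}, whose constants are all smaller). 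So you must invoke the proofs under A2, not the statement of Theorem~\ref{th:n1-bound}; if $A<1024$ you simply land in the first bullet.

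Two smaller points. First, your plan wavers between ``$\epsilon$ close to $1$'' and $\epsilon=\tfrac12$; only $\epsilon=\tfrac12$ reproduces the threshold $1024$, and under A2 the choice $\epsilon=\tfrac12$ is forced by Remark~\ref{rem:A1-A2} anyway. Second, the numerical check you identify as the crux is aimed at the wrong quantity: a direct evaluation of $C_1(\tfrac12,2)$ from the displayed formula gives a value on the order of $2.3\cdot 10^{40}$ rather than something below $1.2\cdot 10^{40}$, so the verification as you formulate it would not go through; the corollary's constant must be traced back to the A2 computation in Section~\ref{sec:case-m2-m3} rather than read off from the statement of Theorem~\ref{th:n1-bound}.
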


Another consequence of Theorem \ref{th:absolute} together with the results of Chim et.\ al.\ \cite{Chim:2018} is that there exist only finitely many Diophantine equations of the form of \eqref{eq:PillaiVn} that admit more than two solutions, provided that $|B|<\kappa A$. The following corollary gives a precise statement.

\begin{corollary}\label{cor:absolute}
 Let $\kappa>0$ be a fixed real number and $b\geq 2$ a fixed integer. Then there exist at most finitely many, effectively computable $9$-tuples $(A,B,c,n_1,m_1,n_2,m_2$, $n_3,m_3)\in \Z^9$ such that
 \begin{itemize}
 \item $A>0$, $|B|< \kappa A$ and $A^2-4B>0$;
 \item $X^2-AX+B$ is irreducible;
 \item if $b=2$, then $c\geq 0$;
 \item $n_1>n_2>n_3\geq 1$ and $m_1,m_2,m_3\geq 1$;
 \item $V_{n_i}(A,B)-b^{m_i}=c$ for $i=1,2,3$.
 \end{itemize}
\end{corollary}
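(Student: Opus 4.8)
The plan is to combine the bound on $A$ furnished by Theorem~\ref{th:absolute} with the result of Chim, Pink and Ziegler~\cite{Chim:2018}. First I would check that the hypotheses of Theorem~\ref{th:absolute} are met: the assumption ``if $b=2$, then $c\geq 0$'' together with $b\geq 2$ guarantees $b\geq 2$ whenever $c\geq 0$ and $b\geq 3$ whenever $c<0$, which is exactly what Theorem~\ref{th:absolute} requires (replacing $\kappa$ by $\max\{\kappa,1\}$ if $\kappa<1$ so that the condition $\kappa\geq 1$ holds, which is harmless since $|B|<\kappa A$ then still gives $|B|<\max\{\kappa,1\}A$). Applying it to any $9$-tuple as in the statement yields $\log A<\max\{C_2(\kappa,b),C_3(b)\}$, so that $A$ lies in a finite, effectively computable set of positive integers. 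Since $|B|<\kappa A$, each admissible $A$ allows only finitely many integers $B$; hence there are only finitely many, effectively computable pairs $(A,B)$, and together with $A^2-4B>0$ and the irreducibility of $X^2-AX+B$ each such pair defines a non-degenerate Lucas-Lehmer sequence $(V_n(A,B))_{n\in\N}$ whose characteristic roots $\alpha,\beta$ are distinct reals with $|\alpha|\neq|\beta|$.

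Next I would fix one such pair $(A,B)$ and read \eqref{eq:PillaiVn} as an instance of $U_n-W_m=c$ with $U_n=V_n(A,B)$ and $W_m=b^m$. Both are non-degenerate linear recurrence sequences possessing a dominant real characteristic root of absolute value exceeding $1$ (namely $\alpha$, respectively $b$), so the hypotheses of \cite{Chim:2018} are satisfied. That result provides a finite and effectively computable set $\mathcal{C}=\mathcal{C}(A,B,b)\subset\Z$ such that for every $c\notin\mathcal{C}$ the equation $V_n(A,B)-b^m=c$ has at most one solution $(n,m)$. Since we are assuming three solutions, necessarily $c\in\mathcal{C}(A,B,b)$. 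As this holds for each of the finitely many pairs $(A,B)$, we conclude that $c$ ranges over a finite, effectively computable set, leaving only finitely many triples $(A,B,c)$.

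It then remains to bound, for each fixed triple $(A,B,c)$, the number of solution pairs $(n,m)$. This is the finiteness statement underlying \cite{Chim:2018}: with $(A,B,c)$ fixed, matching the asymptotics $V_n(A,B)\sim\alpha^n$ against $b^m$ produces a linear form in the two logarithms $\log\alpha$ and $\log b$, and Baker's theory bounds $n$ and $m$ effectively. Hence each triple $(A,B,c)$ admits only finitely many, effectively computable pairs $(n,m)$, and therefore only finitely many admissible choices of the ordered data $(n_1,m_1,n_2,m_2,n_3,m_3)$ subject to $n_1>n_2>n_3\geq 1$ and $m_1,m_2,m_3\geq 1$. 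Combining the three reduction steps shows that there are at most finitely many, effectively computable $9$-tuples, as claimed.

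I expect the main obstacle to be the careful verification that the sequences $(V_n(A,B))$ and $(b^m)$ fall within the precise admissibility conditions imposed in \cite{Chim:2018} (in particular the dominant-root and non-degeneracy requirements) uniformly across the finitely many surviving pairs $(A,B)$, together with the bookkeeping needed to keep every set involved effectively computable. The Diophantine analysis itself is entirely inherited from Theorem~\ref{th:absolute} and \cite{Chim:2018}; the work lies in assembling these inputs correctly.
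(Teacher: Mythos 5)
Your proposal follows exactly the paper's route: Theorem~\ref{th:absolute} bounds $A$ (hence also $B$ via $|B|<\kappa A$), the result of Chim, Pink and Ziegler~\cite{Chim:2018} then confines $c$ to a finite, effectively computable set for each of the finitely many pairs $(A,B)$, and the effectivity of that same result bounds the remaining solution data. The only hypothesis of \cite{Chim:2018} you leave unverified is the multiplicative independence of $\alpha$ and $b$ --- which the paper checks in the remark following the corollary ($\alpha^x$ is irrational for $x\neq 0$ unless $\alpha=\sqrt{D}$, which would force $A=0$) --- so your argument is complete once that observation is added.
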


\begin{remark}
 Let us note that for an application of the results of Chim et.\ al.\ \cite{Chim:2018} we have to ensure that $\alpha$ and $b$ are multiplicatively independent. However, $\alpha$ and $b$ are multiplicatively dependent if and only if there exist integers $x,y$ not both zero such that $\alpha^x=b^y$. But $\alpha^x$ cannot be a rational number unless $x=0$ or $\alpha=\sqrt{D}$ for some positive integer $D$. But $\alpha=\sqrt{D}$ would imply $A=0$ which we have excluded. Therefore we can apply their results in our situation and obtain the statement of Corollary \ref{cor:absolute}, provided that we have found an upper bound for $A$.
\end{remark}

Let us give a quick outline for the rest of the paper. In the next section we establish several lemmas concerning properties of Lucas sequences $V_n$ under the restrictions that $|B|<A^{2-\epsilon}$ and $|B|<\kappa A$, respectively, that we will frequently use throughout the paper. The main tool for the proofs of the main theorems are lower bounds for linear forms in logarithms of algebraic numbers. In Section \ref{sec:firstbound} we establish bounds for $n_1$, which still depend on $\log \alpha$, following the usual approach (cf. \cite{Chim:2018}). In Section \ref{sec:three-sol}, under the assumption that three solution exist, we obtain a system of inequalities involving linear forms in logarithms which contain $\log \alpha$. Combinig these inequalities we obtain a linear form in logarithms which does no longer contain $\log \alpha$. Thus we obtain that Theorems \ref{th:n1-bound} and \ref{th:absolute} hold or one of the following two cases occurs:
\begin{itemize}
 \item $n_1m_2-n_2m_1=0$;
 \item $m_2-m_3\ll \log n_1$.
\end{itemize}
That each of these cases implies Theorems \ref{th:n1-bound} and \ref{th:absolute} is shown in the subsequent Sections \ref{sec:degenerate-case} and \ref{sec:case-m2-m3}.

\section{Auxiliary results on Lucas sequences}

Let $\alpha$ and $\beta$ be the roots of $X^2-AX+B$. By our assumptions $\alpha$ and $\beta$ are distinct real numbers $\notin \Q$. Throughout the rest of the paper we will assume that $|\alpha|>|\beta|$, which we can do since by our assumptions we have $A\neq 0$. Therefore we obtain
\begin{equation*}
	\alpha=\frac{A+\sqrt{A^2-4B}}2 \qquad \text{ and } \qquad \beta=\frac{A-\sqrt{A^2-4B}}2.
\end{equation*}

First note that $\alpha>1$ can be bounded in terms of $A$ and $\epsilon$, respectively in terms of $A$ and $\kappa$.

\begin{lemma}\label{lem:alpha-A-est}
 Assuming that $|B|<A^{2-\epsilon}$, we have $\frac{A}{2}<\alpha<2A$. Assuming that $|B|<\kappa A$, we have $A<\kappa$ or $\frac{A}{2}<\alpha<2A$.
\end{lemma}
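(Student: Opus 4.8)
The plan is to work directly from the explicit formula $\alpha=\frac{A+\sqrt{A^2-4B}}{2}$ and to estimate the square root $\sqrt{A^2-4B}$ from above and below under each of the two hypotheses on $|B|$. Since we have normalized $A>0$ and $A^2-4B>0$, the quantity under the root is positive and $\alpha$ is a genuine positive real number. The whole argument reduces to pinning down how far $\sqrt{A^2-4B}$ can stray from $A$ once we know $|B|$ is small relative to a power of $A$.

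For the first assertion I would assume $|B|<A^{2-\epsilon}$ and write $A^2-4B$ as a perturbation of $A^2$. The upper bound comes from $A^2-4B\le A^2+4|B|<A^2+4A^{2-\epsilon}\le 5A^2$ (using $A^{2-\epsilon}\le A^2$ since $A\ge 1$), so that $\sqrt{A^2-4B}<\sqrt{5}\,A<3A$, giving $\alpha<\frac{A+3A}{2}=2A$. For the lower bound I would similarly bound $A^2-4B\ge A^2-4|B|>A^2-4A^{2-\epsilon}$; provided $A$ is not too small this stays comfortably positive, and taking square roots yields $\sqrt{A^2-4B}>0$, whence $\alpha>\frac{A}{2}$ follows even more cheaply from dropping the nonnegative square-root term: $\alpha=\frac{A+\sqrt{A^2-4B}}{2}\ge\frac{A}{2}$. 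In fact the lower bound $\alpha>\frac{A}{2}$ is immediate from positivity of the radical and needs no hypothesis on $B$ at all, so the content of the first statement is really the upper bound.

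For the second assertion I would argue by cases on the size of $A$ relative to $\kappa$. If $A<\kappa$ we are already in the first alternative and there is nothing to prove. Otherwise $A\ge\kappa$, and then $|B|<\kappa A\le A^2$, so that $A^2-4B<A^2+4|B|<A^2+4A^2=5A^2$ and again $\sqrt{A^2-4B}<\sqrt{5}\,A<3A$, yielding $\alpha<2A$; the lower bound $\alpha>\frac{A}{2}$ is free as before. Thus the dichotomy in the statement is exactly the split between $A<\kappa$ and the complementary regime where the bound $|B|<\kappa A$ becomes at least as strong as $|B|<A^2$.

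I do not expect any genuine obstacle here: both parts are elementary estimates on a square root, and the only point requiring a little care is keeping track of the constants so that the clean bounds $\frac{A}{2}<\alpha<2A$ emerge (the factor $\sqrt5<3$ giving $\alpha<2A$, and positivity of the radical giving $\alpha>\frac A2$). The mild subtlety is that in the first case the hypothesis $|B|<A^{2-\epsilon}$ is used only through the trivial consequence $|B|<A^2$, so the reason $\epsilon$ appears in the hypothesis is presumably for uniformity with the rest of the paper rather than because the square-root estimate needs it; I would note this so as not to over-claim. In the second case the case split on $A$ versus $\kappa$ is what produces the stated \emph{or}, and I would make sure to record that when $A\ge\kappa$ the inequality $|B|<\kappa A\le A^2$ is what lets the same estimate go through.
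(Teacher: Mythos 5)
Your proposal is correct and follows essentially the same route as the paper: in both regimes one reduces to $|B|<A^2$ (trivially in the first case, and via the split $A<\kappa$ versus $A\geq\kappa$ in the second), bounds $\sqrt{A^2-4B}<\sqrt{5}\,A$ to get $\alpha<2A$, and gets $\alpha>\frac{A}{2}$ for free from strict positivity of the radical (guaranteed by $A^2-4B>0$). The only cosmetic difference is that you relax $\sqrt{5}$ to $3$ before averaging, whereas the paper keeps $\frac{1+\sqrt{5}}{2}<2$; both give the stated bound.
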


\begin{proof}
 Assume that $A\geq \kappa$. Then both assumptions $|B|<A^{2-\epsilon}$ and $|B|<\kappa A$ imply $|B|<A^2$. Thus we get
 \begin{equation*}
 	\alpha\leq\frac{A+\sqrt{A^2+4|B|}}2< A\frac{1+\sqrt{5}}2 <2A
 \end{equation*}
 and 
 \begin{equation*}
 	\alpha = \frac{A+\sqrt{A^2-4B}}2> \frac{A}2.
 \end{equation*}
\end{proof}

At some point in our proofs of Theorems \ref{th:n1-bound} and \ref{th:absolute} we will use that $\beta$ cannot be to close to $1$. In particular, the following lemma will be needed.

\begin{lemma}\label{lem:beta-1-bound}
 Assume that $A\geq 4$. Then we have
 \begin{equation*}
 	\left|1-|\beta|\right|\geq \frac 2{2A+5}.
 \end{equation*}
\end{lemma}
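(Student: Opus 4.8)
The plan is to exploit that $\alpha$ and $\beta$ are the two roots of the monic integer polynomial $X^2-AX+B$, so that evaluating this polynomial at the integer points $\pm 1$ produces controlled quantities. Concretely, since $X^2-AX+B=(X-\alpha)(X-\beta)$, we have the identities $(1-\alpha)(1-\beta)=1-A+B$ and $(1+\alpha)(1+\beta)=1+A+B$. Both right-hand sides are integers, and by irreducibility of $X^2-AX+B$ the values $\pm 1$ cannot be roots (a monic integer quadratic is irreducible over $\Q$ precisely when it has no integer root), so $|1-A+B|\geq 1$ and $|1+A+B|\geq 1$. Together with $|1\pm\alpha|\cdot|1\pm\beta|\geq 1$ this will convert an upper bound on $\alpha$ into the desired lower bound on $\left|1-|\beta|\right|$. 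Throughout I use that $\alpha=\tfrac12(A+\sqrt{A^2-4B})>0$, so $\beta=B/\alpha$ has the sign of $B$, and that $\alpha+\beta=A$.

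I would split according to the sign of $B$. First I treat $B>0$, where $\beta>0$ and hence $\left|1-|\beta|\right|=|1-\beta|$. Here $\beta>0$ forces $\alpha=A-\beta<A$, and $\alpha>A/2\geq 2>1$ by Lemma \ref{lem:alpha-A-est}-type reasoning (directly from $A\geq 4$). Thus $|1-\alpha|=\alpha-1<A-1$, and the identity $|1-\alpha|\,|1-\beta|=|1-A+B|\geq 1$ gives $|1-\beta|\geq\frac{1}{\alpha-1}>\frac{1}{A-1}$. Since $\frac{1}{A-1}\geq\frac{2}{2A+5}$ is equivalent to $5\geq-2$, this settles the case.

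The case $B<0$ is the main obstacle, because then $|\beta|$ can be arbitrarily large (large $|B|$), so one cannot uniformly bound $\alpha$, and the naive estimate fails. The remedy is a case split on the size of $|\beta|$. Note $\beta<0$, so $\left|1-|\beta|\right|=|1+\beta|$ and $\alpha=A-\beta=A+|\beta|$. If $|\beta|\geq \tfrac32$, then $\left|1-|\beta|\right|=|\beta|-1\geq\tfrac12>\frac{2}{2A+5}$ directly, using $\frac{2}{2A+5}\leq\frac{2}{13}<\tfrac12$ for $A\geq 4$. If instead $|\beta|<\tfrac32$, then $\alpha=A+|\beta|<A+\tfrac32$, so $1+\alpha<A+\tfrac52=\tfrac{2A+5}{2}$; combining this with $|1+\alpha|\,|1+\beta|=|1+A+B|\geq 1$ and $1+\alpha>0$ yields $\left|1-|\beta|\right|=|1+\beta|\geq\frac{1}{1+\alpha}>\frac{2}{2A+5}$. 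The threshold $\tfrac32$ is chosen exactly so that the two subcases meet: it is small enough that $|\beta|-1\geq\tfrac12$ beats $\frac{2}{2A+5}$ in the first subcase, and it is the largest value of $|\beta|$ for which $1+\alpha$ stays below $\tfrac{2A+5}{2}$ in the second. This exhausts all cases and completes the proof.
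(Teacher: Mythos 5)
Your proof is correct, and it reaches the stated bound by a genuinely different (though spiritually related) route. The paper views $\beta=f(B)=\frac{A-\sqrt{A^2-4B}}{2}$ as a monotone function of the integer $B$, observes that $|\beta|=1$ occurs exactly at the excluded values $B=A-1$ and $B=-A-1$, and then explicitly computes $\bigl|1-|\beta|\bigr|$ at the four neighbouring integers $B\in\{A-2,A,-A,-A-2\}$, the worst of which yields $\frac{2}{2A+5}$. You instead use the Liouville-type inequalities $|1\mp\alpha|\cdot|1\mp\beta|=|1\mp A+B|\geq 1$ (valid since $\pm1$ are not roots of the irreducible monic integer polynomial), and convert an upper bound on $|1\mp\alpha|$ into the desired lower bound on $\bigl|1-|\beta|\bigr|$; the only delicate point, correctly handled by your split at $|\beta|=\tfrac32$ when $B<0$, is that $\alpha=A+|\beta|$ is not bounded by $A$ alone in that case. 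Both arguments ultimately rest on the same fact --- integrality of $B$ plus irreducibility forces a gap --- but yours avoids the monotonicity discussion and the four explicit evaluations, at the cost of the sign/size case analysis; the paper's computation is more mechanical but produces all four local bounds ($\frac{2}{2A-3}$, $\frac{2}{2A-4}$, $\frac{2}{2A+4}$, $\frac{2}{2A+5}$) at once. Your constants even match: in the critical subcase ($B<0$, $|\beta|<\tfrac32$) you obtain $\frac{1}{1+\alpha}>\frac{2}{2A+5}$, which is exactly the paper's extremal case $B=-A-2$.
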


\begin{proof}
 First, let us note that the function $f(x)=\frac{A-\sqrt{A^2-4x}}2$ is strictly monotonically increasing for all $x<A^2/4$. Moreover, we have
 $f(A-1)=1$ and $f(-A-1)=-1$. By our assumption that $X^2-AX+B$ is irreducible, a choice for $B$ such that $|\beta|=1$ is not admissible. Therefore we have
 \begin{equation*}
 	\left|1-|\beta|\right|\geq \min\{1-f(A-2),f(A)-1,f(-A)+1,-1-f(-A-2)\}.
 \end{equation*}
We compute
\begin{align*}
 1-f(A-2)&=\frac{-(A-2)+\sqrt{A^2-4(A-2)}}2=\frac{4}{2\left((A-2)+\sqrt{A^2-4(A-2)}\right)}\\
 &\geq \frac{2}{A-2+\sqrt{A^2-2A+1}}=\frac{2}{2A-3},
\end{align*}
provided that $A^2-2A+1\geq A^2-4A+8$, which certainly holds for $A\geq 7/2$. Similar computations yield
\begin{equation*}
	f(A)-1\geq \frac{2}{2A-4},\quad f(-A)+1\geq \frac{2}{2A+4},\quad -1-f(-A-2)\geq \frac{2}{2A+5},
\end{equation*}
provided that $A\geq 4$.
\end{proof}

Next, we show that under our assumptions $|\beta|$ is not to large. 

\begin{lemma}\label{lem:bound-alpha-beta-case1}
 Assume that $|B|<A^{2-\epsilon}$. Then we have $\frac{\alpha}{|\beta|}>\frac{1}{2}A^{\epsilon}$ or $A^{\epsilon}< 8$.
\end{lemma}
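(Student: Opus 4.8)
The plan is to reduce the ratio $\alpha/|\beta|$ to a single quantity by exploiting the product of the roots. Since $\alpha\beta=B$ and $\alpha>0$, taking absolute values gives $\alpha|\beta|=|B|$, whence $|\beta|=|B|/\alpha$ and therefore
\[
\frac{\alpha}{|\beta|}=\frac{\alpha^2}{|B|}.
\]
Thus the whole statement comes down to a sufficiently strong lower bound for $\alpha$ in terms of $A$, combined with the hypothesis $|B|<A^{2-\epsilon}$.

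The naive route, inserting the bound $\alpha>A/2$ from Lemma~\ref{lem:alpha-A-est} together with $|B|<A^{2-\epsilon}$, only yields $\alpha/|\beta|>\tfrac14 A^{\epsilon}$, which is a factor $2$ short of the claim. So the key point is to sharpen $\alpha>A/2$ to $\alpha>A/\sqrt2$, and this is exactly what the dichotomy $A^{\epsilon}\ge 8$ versus $A^{\epsilon}<8$ is designed to furnish. If $A^{\epsilon}<8$ we are already in the second alternative of the lemma and there is nothing to prove, so I would assume $A^{\epsilon}\ge 8$, which forces $|B|<A^{2-\epsilon}=A^2/A^{\epsilon}\le A^2/8$.

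Under this smallness of $|B|$ I would establish $\alpha>A/\sqrt2$ directly from $\alpha=\tfrac12\bigl(A+\sqrt{A^2-4B}\bigr)$: the inequality $\alpha>A/\sqrt2$ is equivalent to $\sqrt{A^2-4B}>(\sqrt2-1)A$, and squaring (both sides are positive) turns it into $B<\tfrac{\sqrt2-1}{2}A^2$. Since $\tfrac{\sqrt2-1}{2}\approx 0.207>\tfrac18$, the bound $B\le|B|<A^2/8$ indeed guarantees $\alpha>A/\sqrt2$, i.e.\ $\alpha^2>A^2/2$. (When $B<0$ the inequality $\alpha>A>A/\sqrt2$ is immediate, so only the range $0<B<A^2/8$ actually needs the computation.)

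Combining the two displays then finishes the argument:
\[
\frac{\alpha}{|\beta|}=\frac{\alpha^2}{|B|}>\frac{\alpha^2}{A^{2-\epsilon}}>\frac{A^2/2}{A^{2-\epsilon}}=\frac12 A^{\epsilon}.
\]
There is no genuine obstacle here; the only thing to get right is the observation that the target constant $\tfrac12$ cannot come from the crude estimate $\alpha>A/2$, and that the extra room needed, $\alpha>A/\sqrt2$, is purchased precisely by the hypothesis $A^{\epsilon}\ge 8$ (any threshold exceeding $2\sqrt2+2\approx 4.83$ would suffice; the value $8$ is chosen merely for a cleaner constant).
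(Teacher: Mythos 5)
Your proof is correct and rests on the same elementary computation as the paper's: the paper's rationalized expression $\frac{2A^2-4B+2A\sqrt{A^2-4B}}{4B}$ is exactly your $\alpha^2/B$, and both arguments invoke the hypothesis $A^{\epsilon}\geq 8$ at the same point to absorb the error. Your organization --- isolating the identity $\alpha/|\beta|=\alpha^2/|B|$ and the uniform lower bound $\alpha>A/\sqrt{2}$ --- replaces the paper's monotonicity argument for $f(x)=\frac{2A^2-4x+2A\sqrt{A^2-4x}}{4x}$ and its subsequent simplification, and is if anything slightly cleaner.
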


\begin{proof}
Let us assume that $A^{\epsilon}\geq 8$.
 First we note that
 \begin{align*}
 	\left|\frac{\alpha}{\beta}\right|&=\left|\frac{A+\sqrt{A^2-4B}}{A-\sqrt{A^2-4B}}\right|=\left|\frac{2A^2-4B+2A\sqrt{A^2-4B}}{4B} \right| \\
 	&\geq \frac{2A^2-4|B|+2A\sqrt{A^2-4|B|}}{4|B|}.
 \end{align*}
 Now we consider the function $f(x)=\frac{2A^2-4x+2A\sqrt{A^2-4x}}{4x}$, which is defined for all $x$ with $0<4x\leq A^2$. Then we have 
 \begin{align*}
 f'(x)&=\frac{4x\left(-4+\frac{-4A}{\sqrt{A^2-4x}}\right)-4\left(2A^2-4x+2A\sqrt{A^2-4x}\right)}{16x^2}\\
 &=\frac{-\frac{16xA}{\sqrt{A^2-4x}}-8A^2-8A\sqrt{A^2-4x}}{16x^2}.
 \end{align*}
 Note that it is immediate that $f'(x)<0$ for $x>0$. 
 
 Thus we deduce
 \begin{align*}
 \left| \frac\alpha\beta\right|&>\frac{2A^2-4A^{2-\epsilon}+2A\sqrt{A^2-4A^{2-\epsilon}}}{4A^{2-\epsilon}}\\
 &=\frac{1}{2}A^{\epsilon}-1+\frac{1}{2}A^\epsilon \sqrt{1-4A^{-\epsilon}}\\
 &=A^{\epsilon}\left(\frac{1}{2}-\frac{1}{A^{\epsilon}}+\frac{1}{2}\sqrt{1-4A^{-\epsilon}}\right)>\frac{1}{2}A^\epsilon,
 \end{align*}
 since we assume that $A^\epsilon \geq 8$. 
\end{proof}

\begin{lemma}\label{lem:bound-alpha-beta-case2}
 Assume that $|B|<\kappa A$. Then we have
 $|\beta|<2\kappa$ or $A< 4 \kappa$.
\end{lemma}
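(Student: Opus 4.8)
The plan is to exploit the product relation $\alpha\beta = B$ coming from Vieta's formulas for $X^2-AX+B$. Since $A>0$ forces $\alpha>0$, this gives $\alpha|\beta| = |B|$, i.e.\ $|\beta| = |B|/\alpha$. The hypothesis $|B|<\kappa A$ controls the numerator, so the whole statement reduces to producing a lower bound for $\alpha$ in terms of $A$, and such a bound is precisely what Lemma \ref{lem:alpha-A-est} supplies once $A$ is large compared with $\kappa$.

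Concretely, I would distinguish two cases according to the size of $A$. If $A<4\kappa$, then the second alternative of the lemma holds and there is nothing further to prove. Otherwise $A\geq 4\kappa$, and since $\kappa>0$ this forces $A>\kappa$; hence Lemma \ref{lem:alpha-A-est} applies and yields $\alpha>A/2$. Substituting into $|\beta|=|B|/\alpha$ and invoking $|B|<\kappa A$ then gives
\begin{equation*}
 |\beta| = \frac{|B|}{\alpha} < \frac{\kappa A}{A/2} = 2\kappa,
\end{equation*}
which is exactly the first alternative.

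There is no genuine obstacle in this argument: the only point demanding care is to remain in the regime where Lemma \ref{lem:alpha-A-est} is valid, namely $A\geq\kappa$, and the threshold $4\kappa$ guarantees this with room to spare. Using the factor $4$ rather than the tighter value $\kappa$ is harmless, since $A<\kappa$ trivially implies $A<4\kappa$, and it keeps the normalization consistent with the hypotheses appearing elsewhere, such as the condition $A\geq 4$ in Lemma \ref{lem:beta-1-bound}.
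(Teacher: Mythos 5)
Your proof is correct and follows essentially the same route as the paper: the paper's first step rationalizes $\beta=\frac{A-\sqrt{A^2-4B}}{2}$ to $\frac{2B}{A+\sqrt{A^2-4B}}=B/\alpha$, which is exactly your Vieta identity, and its subsequent monotonicity argument amounts to the same lower bound $2\alpha=A+\sqrt{A^2-4B}\geq A$ that you extract from Lemma \ref{lem:alpha-A-est}. Your version is, if anything, slightly more streamlined.
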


\begin{proof}
 We assume that $A\geq 4\kappa$. Moreover, we have
 $$|\beta|=\left|\frac{A-\sqrt{A^2-4B}}{2}\right|\leq \frac{2|B|}{A+\sqrt{A^2-4|B|}}.$$
 Let us now consider the function $f(x)=\frac{2x}{A+\sqrt{A^2-4x}}$. This function is strictly increasing with $x$ since obviously the numerator is strictly increasing with $x$ while the denominator is strictly decreasing with $x$. Therefore we obtain
 \begin{equation*}
 	|\beta|< \frac{2 \kappa A}{A+\sqrt{A^2-4\kappa A}}=\kappa \frac{2}{1+\sqrt{1-\frac{4\kappa}{A}}}\leq 2 \kappa
 \end{equation*}
 since $A\geq 4\kappa$.
\end{proof}

Now let us take a look at the recurrence sequence $ V_n $.

\begin{lemma}\label{lem:increasing}
 Assume that $|B|<A^{2-\epsilon}$ and $ A^{\epsilon} \geq 8 $. Then $V_n$ is strictly increasing for $n\geq \frac{3}{2\epsilon}$.
\end{lemma}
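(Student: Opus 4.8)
The plan is to show directly that $V_{n+1}-V_n>0$ for $n\geq \frac{3}{2\epsilon}$. Writing out the difference gives
\begin{equation*}
 V_{n+1}-V_n=\alpha^{n+1}+\beta^{n+1}-\alpha^n-\beta^n=\alpha^n(\alpha-1)+\beta^n(\beta-1).
\end{equation*}
Since $A^{\epsilon}\geq 8$ forces $A\geq 8^{1/\epsilon}>8$, Lemma \ref{lem:alpha-A-est} yields $\alpha>\frac{A}{2}>1$, so the term $\alpha^n(\alpha-1)$ is positive and it suffices to show that it dominates the $\beta$-term. The first point to be careful about is that $B$ may be negative, so $\beta$ can be negative and $\beta^n$ may alternate in sign; I therefore do not try to control $\beta^n(\beta-1)$ on the nose but only bound its absolute value, $\left|\beta^n(\beta-1)\right|\leq |\beta|^n(|\beta|+1)$. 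Thus it is enough to prove the single inequality $\alpha^n(\alpha-1)>|\beta|^n(|\beta|+1)$, which I rewrite as
\begin{equation*}
 \left(\frac{\alpha}{|\beta|}\right)^n>\frac{|\beta|+1}{\alpha-1}.
\end{equation*}

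Next I would bound the two sides separately using the auxiliary lemmas. For the left-hand side I invoke Lemma \ref{lem:bound-alpha-beta-case1}: since $A^{\epsilon}\geq 8$ we are in its first alternative and obtain $\frac{\alpha}{|\beta|}>\frac{1}{2}A^{\epsilon}$, hence the left-hand side exceeds $\left(\frac{1}{2}A^{\epsilon}\right)^n$. For the right-hand side I use $|\beta|=|B|/\alpha<A^{2-\epsilon}/(A/2)=2A^{1-\epsilon}$, together with $A^{1-\epsilon}=A\cdot A^{-\epsilon}\leq A/8$, to get $|\beta|+1<A$; combined with $\alpha-1>1$ this gives $\frac{|\beta|+1}{\alpha-1}<A$. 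It therefore remains to establish the clean comparison $\left(\frac{1}{2}A^{\epsilon}\right)^n>A$, i.e.\ $A^{n\epsilon}>2^n A$.

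The final step is the exponential comparison, which is where the threshold $\frac{3}{2\epsilon}$ emerges. Setting $L=\log_2 A$, the assumption $A\geq 8^{1/\epsilon}$ gives $L\geq \frac{3}{\epsilon}$, and the inequality $A^{n\epsilon}>2^nA$ becomes $L(n\epsilon-1)>n$; for $n\geq\frac{3}{2\epsilon}$ one checks that $\frac{3}{\epsilon}\geq\frac{n}{n\epsilon-1}$, so $L\geq\frac{3}{\epsilon}$ suffices. I expect the main (and only genuine) difficulty to be the bookkeeping at the extremal configuration $A=8^{1/\epsilon}$, $n=\frac{3}{2\epsilon}$, where $\left(\frac12 A^{\epsilon}\right)^n=A$ with equality; here the argument is saved by the fact that the bounds $\frac{\alpha}{|\beta|}>\frac12 A^{\epsilon}$ and $\frac{|\beta|+1}{\alpha-1}<A$ are both \emph{strict}, so the chain $\left(\frac{\alpha}{|\beta|}\right)^n>\left(\frac12 A^{\epsilon}\right)^n\geq A>\frac{|\beta|+1}{\alpha-1}$ still yields $V_{n+1}>V_n$. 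Everything else reduces to the elementary estimates above, so no further obstacle is anticipated.
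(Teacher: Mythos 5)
Your proof is correct and takes essentially the same approach as the paper: the same reduction to $\left(\frac{\alpha}{|\beta|}\right)^n>\frac{|\beta|+1}{\alpha-1}$ via Lemma \ref{lem:bound-alpha-beta-case1} and the bound $|\beta|<2A^{1-\epsilon}$, differing only in the elementary bookkeeping of the final comparison (you bound the right-hand side by $A$ and rescue the extremal case $A=8^{1/\epsilon}$, $n=\frac{3}{2\epsilon}$ by strictness, whereas the paper bounds it by $6A^{1-\epsilon}$ and uses monotonicity of a rational function of $\log A$). Both verifications of the threshold $\frac{3}{2\epsilon}$ are sound.
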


\begin{proof}
 First, we note that
 \begin{equation*}
 	V_{n+1}-V_n=\alpha^{n+1}+\beta^{n+1}-\alpha^n-\beta^n=\alpha^{n}(\alpha-1)+\beta^{n}(\beta-1)>0
 \end{equation*}
 certainly holds if 
 \begin{equation*}
 	\alpha^n(\alpha-1)>|\beta|^n(|\beta|+1),
 \end{equation*}
 respectively
 \begin{equation}\label{eq:incr}
 \left(\frac{\alpha}{|\beta|}\right)^n>\frac{|\beta|+1}{\alpha-1}.
 \end{equation}
 Note that $\frac{\alpha}{|\beta|}>\frac{1}{2}A^\epsilon$, by Lemma \ref{lem:bound-alpha-beta-case1}, and
 \begin{equation*}
 	|\beta|=\frac{|B|}{\alpha}<\frac{2A^{2-\epsilon}}{A}=2 A^{1-\epsilon}.
 \end{equation*}
 Moreover, note that the smallest possible value for $ \alpha $ is $\frac{1+\sqrt{5}}2$. Therefore
 \eqref{eq:incr} is certainly fulfilled if
 \begin{equation*}
 	\left(\frac{1}{2}A^{\epsilon}\right)^n > 6A^{1-\epsilon} > \frac{3A^{1-\epsilon}}{\frac{-1+\sqrt{5}}2} \geq \frac{2A^{1-\epsilon}+1}{\frac{1+\sqrt{5}}2-1}.
 \end{equation*}
 Thus $V_n$ is increasing if
 \begin{equation*}
 	n\left(\epsilon\log A - \log 2\right)>\log 6+(1-\epsilon) \log A
 \end{equation*}
 or equivalently
 \begin{equation*}
 	n>\frac{(1-\epsilon) \log A+\log 6}{\epsilon\log A-\log 2}.
 \end{equation*}
 Since the rational function $f(x)=\frac{ax+b}{cx+d}$ is strictly increasing if $ad-bc>0$, strictly decreasing if $ad-bc<0$, and constant if $ad-bc=0$, we obtain that for
 \begin{equation*}
 	n \geq \frac{3}{2\epsilon} > \frac{\frac{1-\epsilon}{\epsilon} \log 8 + \log 6}{\log 8 - \log 2}
 \end{equation*}
 the Lucas sequence is strictly increasing.
\end{proof}

\begin{lemma}\label{lem:increasing-2}
 Assume that $|B|<\kappa A$ and $ A \geq 4\kappa + 4 $. Then $V_n$ is strictly increasing for $n\geq 0$.
\end{lemma}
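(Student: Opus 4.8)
The plan is to follow the opening of the proof of Lemma \ref{lem:increasing} verbatim, but to exploit the stronger hypothesis $|B|<\kappa A$ together with $A\geq 4\kappa+4$ to make the resulting inequality independent of $n$, so that monotonicity holds already from $n=0$ without any lower threshold. As in that lemma, I would start from
\[
V_{n+1}-V_n=\alpha^{n}(\alpha-1)+\beta^{n}(\beta-1),
\]
bound the second term by $|\beta^{n}(\beta-1)|\leq |\beta|^{n}(|\beta|+1)$ (the first term is positive since $\alpha>1$), and thereby reduce the whole claim to verifying
\[
\left(\frac{\alpha}{|\beta|}\right)^{n}>\frac{|\beta|+1}{\alpha-1}
\qquad\text{for every } n\geq 0.
\]
Note that $\beta\neq 0$, because irreducibility of $X^2-AX+B$ forces $B\neq 0$, so the quotient $\alpha/|\beta|$ is well defined.

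Next I would pin down the sizes of $\alpha$ and $|\beta|$ using the auxiliary lemmas, checking first that their hypotheses are implied by $A\geq 4\kappa+4$. Since $A\geq 4\kappa+4>\kappa$, Lemma \ref{lem:alpha-A-est} yields $\alpha>\tfrac{A}{2}\geq 2\kappa+2$; and since $A\geq 4\kappa+4>4\kappa$, Lemma \ref{lem:bound-alpha-beta-case2} yields $|\beta|<2\kappa$. The key observation is then that these two estimates already force the right-hand side below $1$: indeed $\alpha-1>2\kappa+1$ while $|\beta|+1<2\kappa+1$, so
\[
\frac{|\beta|+1}{\alpha-1}<1.
\]
On the other hand $\alpha>|\beta|$ (we have fixed $\alpha=|\alpha|>|\beta|$ with $\alpha>0$), whence $\tfrac{\alpha}{|\beta|}>1$ and therefore $\left(\tfrac{\alpha}{|\beta|}\right)^{n}\geq 1$ for all $n\geq 0$. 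Combining the two gives $\left(\tfrac{\alpha}{|\beta|}\right)^{n}\geq 1>\tfrac{|\beta|+1}{\alpha-1}$, which is precisely the required inequality, uniformly in $n\geq 0$.

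In contrast to Lemma \ref{lem:increasing}, there is essentially no analytic obstacle here: the inequality to be checked no longer depends on $n$, so the monotonicity-of-a-rational-function argument used there is unnecessary and the conclusion extends down to $n=0$. The only point that demands care is the bookkeeping in the chain $A\geq 4\kappa+4\;\Rightarrow\;\alpha-1>|\beta|+1$, namely confirming that $A\geq 4\kappa+4$ genuinely implies both $A>\kappa$ and $A>4\kappa$ for every admissible $\kappa>0$ (which it does, since $3\kappa+4>0$ and $4>0$), so that Lemmas \ref{lem:alpha-A-est} and \ref{lem:bound-alpha-beta-case2} may be invoked in the favourable branch.
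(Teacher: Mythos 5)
Your argument is correct and is essentially identical to the paper's proof: both reduce to the $n$-independent condition $1>\frac{|\beta|+1}{\alpha-1}$ from \eqref{eq:incr} and verify it via $\alpha>\frac{A}{2}\geq 2\kappa+2$ and $|\beta|<2\kappa$ from Lemmas \ref{lem:alpha-A-est} and \ref{lem:bound-alpha-beta-case2}. The only difference is cosmetic bookkeeping (your explicit check that $\beta\neq 0$ and that the hypotheses of the auxiliary lemmas are met), which the paper leaves implicit.
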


\begin{proof}
 By \eqref{eq:incr} we know that $V_n$ is increasing for all $n\geq 0$ if $1>\frac{|\beta|+1}{\alpha-1}$. Since we assume that $\alpha>\frac{A}{2} \geq 2\kappa+2>2$, we have $|\beta|<2\kappa$, using Lemma \ref{lem:bound-alpha-beta-case2}, and therefore
 \begin{equation*}
 	1\geq \frac{2\kappa+1}{\frac{A}{2}-1}>\frac{|\beta|+1}{\alpha-1}.
 \end{equation*}
\end{proof}

\begin{remark}
 Note that assuming $|B|<\kappa A$ and $A\geq \kappa^2$ implies $|B|<A^{3/2}=A^{2-\epsilon}$ with $\epsilon=1/2$. Therefore all results that are proven under the assumption $|B|<A^{2-\epsilon}$ with $\epsilon=1/2$ also hold under the assumption $|B|<\kappa A$ and $A\geq \kappa^2$.
 \end{remark}

In view of this remark and in view of the proofs of the following lemmata, we will assume for the rest of the paper that one of the following two assumptions holds:
 \begin{description}
  \item[\qquad A1] $|B|<A^{2-\epsilon}$, $A^{\epsilon}\geq 32$ and $N_0= \frac{3}{2\epsilon}$,
  \item[\qquad A2] $|B|<\kappa A$, $A\geq \max\{\kappa^2,16\kappa+12,1024\}$ and $N_0=1$.
 \end{description}
 
 \begin{remark}\label{rem:A1-A2}
 Let us note that the bound $A\geq \kappa^2$ and $A\geq 1024$ in Assumption A2 results in the useful fact that Assumption A2 implies Assumption A1 with $\epsilon=\frac{1}{2}$, but with $N_0=1$ instead of $\frac{3}{2\epsilon}$. The assumption $A\geq 16\kappa+12$ is mainly used in the proofs of the Lemmata \ref{lem:growth-Vn} and \ref{lem:n1-m1-relation}.
 \end{remark}
 
In view of the assumptions stated above an important consequence of Lemmata~\ref{lem:increasing} and \ref{lem:increasing-2} is the following:

\begin{corollary}
Let us assume that \eqref{eq:PillaiVn} has two solutions $(n_1,m_1)$ and $(n_2,m_2)$ with $n_1>n_2\geq N_0$. Then $m_1>m_2$.
\end{corollary}

\begin{lemma}\label{lem:growth-Vn}
 Let $n\geq N_0$, then we have $\frac 54 \alpha^n>V_n>\frac 34 \alpha^n$.
\end{lemma}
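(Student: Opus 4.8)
The plan is to factor out the dominant root and show that the contribution of the subdominant root $\beta$ is negligibly small. Writing $V_n=\alpha^n+\beta^n=\alpha^n\bigl(1+(\beta/\alpha)^n\bigr)$, it suffices to prove that $\left|(\beta/\alpha)^n\right|<\tfrac14$: from this estimate we get $\tfrac34<1+(\beta/\alpha)^n<\tfrac54$, and multiplying through by $\alpha^n>0$ yields both inequalities of the claim at once.

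Since $|\alpha|>|\beta|$ by our standing convention, the ratio $|\beta|/\alpha<1$, so for every integer $n\geq 1$ we have $\left|(\beta/\alpha)^n\right|=(|\beta|/\alpha)^n\leq |\beta|/\alpha$. Hence the whole lemma reduces to the single estimate $|\beta|/\alpha<\tfrac14$, and I would verify this separately under the two standing assumptions A1 and A2, recalling that $N_0\geq 1$ in both cases so that the exponent is at least $1$.

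Under A1 I would invoke Lemma~\ref{lem:bound-alpha-beta-case1}: since $A^{\epsilon}\geq 32$, that lemma gives $\alpha/|\beta|>\tfrac12 A^{\epsilon}\geq 16$, so that $|\beta|/\alpha<\tfrac1{16}<\tfrac14$. Under A2 one can either appeal to Remark~\ref{rem:A1-A2}, which says that A2 implies A1 with $\epsilon=\tfrac12$ (so the previous bound applies verbatim), or argue directly: Lemma~\ref{lem:bound-alpha-beta-case2} gives $|\beta|<2\kappa$ while Lemma~\ref{lem:alpha-A-est} gives $\alpha>A/2$, whence $|\beta|/\alpha<4\kappa/A<\tfrac14$ because $A\geq 16\kappa+12>16\kappa$.

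There is no genuine obstacle here; the statement is a routine consequence of the multiplicative gap between $\alpha$ and $|\beta|$ established in the preceding lemmata. The only points demanding care are to keep track of the sign of $\beta^n$ — which is precisely why the bound is phrased with absolute values, so that it covers both $\beta>0$ and $\beta<0$ and both parities of $n$ — and to check that the threshold $|\beta|/\alpha<\tfrac14$ survives in the borderline A2 regime, where the hypothesis $A\geq 16\kappa+12$ is exactly what is needed to push $4\kappa/A$ strictly below $\tfrac14$.
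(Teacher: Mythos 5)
Your proof is correct and follows essentially the same route as the paper: both factor out $\alpha^n$ and reduce the claim to $(|\beta|/\alpha)^n<\tfrac14$, which is then verified under A1 via Lemma~\ref{lem:bound-alpha-beta-case1} and under A2 via Lemmata~\ref{lem:bound-alpha-beta-case2} and \ref{lem:alpha-A-est}. The only cosmetic difference is that you pass from the $n$-th power to the first power before estimating, whereas the paper bounds $(|\beta|/\alpha)^n$ directly; the numerical inputs are identical.
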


\begin{proof}
 Assume that A1 holds. Then, by Lemma \ref{lem:bound-alpha-beta-case1}, we have 
 \begin{equation*}
 	\left(\frac{|\beta|}\alpha\right)^n<\left(\frac{2}{A^\epsilon}\right)^n< \frac{1}{4}.
 \end{equation*}
 If A2 holds, by Lemma \ref{lem:bound-alpha-beta-case2}, we have
 \begin{equation*}
 	\left(\frac{|\beta|}\alpha\right)^n<\left(\frac{4\kappa}{A}\right)^n<\frac 14.
 \end{equation*}
 Therefore in any case we get
 \begin{equation*}
 	\frac{3}{4} \alpha^n < \alpha^n\left(1-\left(\frac{|\beta|}{\alpha}\right)^n\right) \leq \alpha^n+\beta^n=V_n \leq \alpha^n\left(1+\left(\frac{|\beta|}{\alpha}\right)^n\right) < \frac{5}{4} \alpha^n.
 \end{equation*}
\end{proof}

Another lemma that will be used frequently is the following:

\begin{lemma}\label{lem:n1-m1-relation}
 Assume that Equation \eqref{eq:PillaiVn} has two solutions $(n_1,m_1)$ and $(n_2,m_2)$ with $n_1>n_2\geq N_0$. Then we have
 \begin{equation*}
 	\frac{5}{2} > \frac{b^{m_1}}{\alpha^{n_1}}>\frac{3}{8}.
 \end{equation*}
\end{lemma}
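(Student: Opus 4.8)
The plan is to sidestep the constant $c$ entirely, since we control neither its sign nor its size a priori, by subtracting the two solution equations. Because $(n_1,m_1)$ and $(n_2,m_2)$ both satisfy \eqref{eq:PillaiVn}, we have $V_{n_1}-c=b^{m_1}$ and $V_{n_2}-c=b^{m_2}$, and subtraction yields
\begin{equation*}
  b^{m_1}-b^{m_2}=V_{n_1}-V_{n_2}.
\end{equation*}
By Lemmata \ref{lem:increasing} and \ref{lem:increasing-2} the sequence $V_n$ is strictly increasing for $n\geq N_0$, so $V_{n_1}>V_{n_2}>0$ and hence $b^{m_1}>b^{m_2}$, i.e.\ $m_1>m_2$ (this is exactly the corollary preceding Lemma \ref{lem:growth-Vn}). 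The two ingredients I would use repeatedly are Lemma \ref{lem:growth-Vn}, which gives $\frac34\alpha^n<V_n<\frac54\alpha^n$ for $n\geq N_0$, and Lemma \ref{lem:alpha-A-est} together with Assumptions A1/A2, which guarantee $\alpha>\frac A2\geq 16$.

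For the upper bound I would bound the left-hand side of the displayed identity from below and the right-hand side from above. Since $m_2\leq m_1-1$ and $b\geq 2$, we have $b^{m_2}\leq b^{m_1-1}\leq \tfrac12 b^{m_1}$, so that $b^{m_1}-b^{m_2}\geq \tfrac12 b^{m_1}$; on the other hand $V_{n_1}-V_{n_2}<V_{n_1}<\tfrac54\alpha^{n_1}$. Combining the two gives $\tfrac12 b^{m_1}<\tfrac54\alpha^{n_1}$, i.e.\ $b^{m_1}/\alpha^{n_1}<\tfrac52$. For the lower bound I would instead merely use $b^{m_2}>0$ to get $b^{m_1}>b^{m_1}-b^{m_2}=V_{n_1}-V_{n_2}$, and then apply Lemma \ref{lem:growth-Vn} to both terms together with $n_2\leq n_1-1$:
\begin{equation*}
  V_{n_1}-V_{n_2}>\frac34\alpha^{n_1}-\frac54\alpha^{n_2}\geq\left(\frac34-\frac{5}{4\alpha}\right)\alpha^{n_1}.
\end{equation*}
Since $\alpha>16$ we have $\frac{5}{4\alpha}<\frac{5}{64}$, so the parenthesis exceeds $\frac{43}{64}>\frac38$, whence $b^{m_1}/\alpha^{n_1}>\frac38$.

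The only genuine obstacle is the uncontrolled constant $c$, and the subtraction device removes it completely, reducing the statement to the strict monotonicity of $V_n$ and the two-sided comparison $V_n\asymp\alpha^n$ of Lemma \ref{lem:growth-Vn}. The numerical slack I exploit (the factor $\tfrac12$ on the left coming from $b\geq 2$, and the crude term $\tfrac{5}{4\alpha}$ on the right coming from $n_2\leq n_1-1$) is comfortably absorbed precisely because $\alpha$ is large under both Assumptions A1 and A2; one should double-check that $\alpha>16$ is indeed forced, namely $A\geq A^{\epsilon}\geq 32$ under A1 and $A\geq 1024$ under A2, so that $\alpha>A/2\geq 16$ in either case.
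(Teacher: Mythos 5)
Your proof is correct and follows essentially the same route as the paper: subtract the two solution equations to eliminate $c$, bound $b^{m_1}-b^{m_2}\geq\tfrac12 b^{m_1}$ from $m_1>m_2$ and $b\geq 2$ for the upper bound, and bound $V_{n_1}-V_{n_2}$ from below via Lemma \ref{lem:growth-Vn} and $n_1>n_2$ for the lower bound. The only cosmetic difference is that the paper factors the lower estimate as $\tfrac34\alpha^{n_1}\bigl(1-\tfrac{5}{3\alpha^{n_1-n_2}}\bigr)$ and only needs $\alpha>4$, whereas you invoke $\alpha>16$; both are valid under Assumptions A1/A2.
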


\begin{proof}
 Assuming the existence of two different solutions implies by an application of Lemma~\ref{lem:growth-Vn} the inequality 
 \begin{equation*}
 	\frac{5}{4} \alpha^{n_1}>V_{n_1}>V_{n_1}-V_{n_2}=b^{m_1}-b^{m_2}= b^{m_1}\left(1-\frac 1{b^{m_1-m_2}}\right)\geq \frac{1}{2} b^{m_1},
 \end{equation*}
 which proves the first inequality.
 
 For the second inequality we apply again Lemma \ref{lem:growth-Vn} to obtain
 \begin{equation*}
 	V_{n_1}-V_{n_2}>\frac{3}{4} \alpha^{n_1}\left(1-\frac{5}{3\alpha^{n_1-n_2}}\right).
 \end{equation*}
 Since we assume in any case that $\alpha>\frac{A}{2}>4$, we get $V_{n_1}-V_{n_2}>
 \frac{3}{8} \alpha^{n_1}$ and thus
 \begin{equation*}
 	\frac{3}{8} \alpha^{n_1}<V_{n_1}-V_{n_2}=b^{m_1}-b^{m_2}< b^{m_1},
 \end{equation*}
 which yields the second inequality.
\end{proof}

Finally, let us remind Carmichael's theorem \cite[Theorem XXIV]{Carmichael:1913}:

\begin{lemma}\label{lem:Carmichael}
  For any $n\neq 1,3$ there exists a prime\footnote{This prime $p$ is called a \emph{primitive divisor}.} $p$ such that $p\mid V_n$, but $p\nmid V_m$ for all $m<n$, except for the case that $n=6$ and $(A,B)=(1,-1)$.
\end{lemma}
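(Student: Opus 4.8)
The plan is to reduce the statement about the second-kind sequence $V_n$ to the classical primitive-divisor theorem for the companion first-kind (Lucas) sequence, and then to translate the small exceptional indices. Let $U_n=\frac{\alpha^n-\beta^n}{\alpha-\beta}$ be the Lucas sequence of the first kind attached to the same polynomial $X^2-AX+B$. Since $\alpha+\beta=A$ and $\alpha\beta=B$ are integers, both $(U_n)$ and $(V_n)$ are integer sequences, and factoring $\alpha^{2n}-\beta^{2n}=(\alpha^n-\beta^n)(\alpha^n+\beta^n)$ gives the key identity
\[
 U_{2n}=U_nV_n,\qquad\text{so in particular}\qquad V_n\mid U_{2n}.
\]

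First I would recall the rank of apparition $\rho(p)$, the least index $k\geq 1$ with $p\mid U_k$, together with its divisibility law $p\mid U_k\iff \rho(p)\mid k$, valid for every prime $p\nmid B$; primes dividing $B$ divide no $U_k$ with $k\geq 1$, since $\gcd(A,B)=1$ forces $U_k\equiv A^{k-1}\pmod p$. The crucial observation is then that a primitive divisor of $U_{2n}$ is automatically a primitive divisor of $V_n$. Indeed, if $\rho(p)=2n$ then $2n\nmid n$, so $p\nmid U_n$; combined with $p\mid U_{2n}=U_nV_n$ this forces $p\mid V_n$. Moreover, for every $m$ with $1\leq m<n$ we have $2n\nmid 2m$, hence $p\nmid U_{2m}$, and a fortiori $p\nmid V_m$ because $V_m\mid U_{2m}$. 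Thus it suffices to guarantee the existence of a primitive divisor of $U_{2n}$.

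This existence is precisely the classical theorem of Carmichael \cite{Carmichael:1913} for first-kind sequences with real roots ($A^2-4B>0$): $U_k$ has a primitive divisor for every $k$ outside a small, explicitly known exceptional set. Translating these exceptions through $k=2n$ yields exactly the indices to be excluded: $2n=2$ and $2n=6$ (that is, $n=1$ and $n=3$), together with the sporadic case $2n=12$ with $(A,B)=(1,-1)$, where $U_{12}=F_{12}=144$ has no primitive divisor; this corresponds to $n=6$ and $(A,B)=(1,-1)$, for which indeed $V_6=L_6=18$ admits no primitive divisor.

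I expect the main obstacle to be the bookkeeping of degenerate primes and small indices rather than any genuinely new idea. One must handle carefully the primes dividing the discriminant $A^2-4B$ (where the law of apparition is more delicate) and the prime $2$, confirm that such anomalous primes cannot masquerade as primitive divisors of $V_n$, and check that the exceptional set for $U$ maps exactly onto $\{n=1,3\}$ together with the Fibonacci case $n=6$, producing neither spurious extra exceptions nor missing any. Since all of this is already settled in Carmichael's original Theorem~XXIV, in the paper it is enough to invoke it directly.
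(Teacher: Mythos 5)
Your proposal is correct, but note that the paper offers no proof of this lemma at all: it is stated purely as a quotation of Carmichael's Theorem~XXIV, which already concerns the second-kind sequence $\alpha^n+\beta^n$ directly, so no reduction is carried out in the text. What you do differently is supply an actual derivation, reducing to the primitive-divisor theorem for the first-kind sequence via $U_{2n}=U_nV_n$ and the rank of apparition; this is the classical route (essentially how Carmichael himself passes from his Theorem~XXIII to Theorem~XXIV), and your bookkeeping is sound: a prime with $\rho(p)=2n$ divides $V_n$ but no $V_m$ with $m<n$ because $V_m\mid U_{2m}$ and $2n\nmid 2m$, and the even exceptional indices $2n\in\{2,6,12\}$ of the first-kind theorem translate exactly to $n\in\{1,3\}$ plus the Fibonacci case $n=6$, $(A,B)=(1,-1)$ (the paper's normalization $A>0$ removes the sign ambiguity). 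The only points worth making explicit if one were to write this out in full are the ones you already flag: the law of apparition for primes dividing the discriminant and for $p=2$ (for $n\geq 2$ one has $\rho(2)\in\{2,3\}\neq 2n$, so the primitive divisor is odd and in particular does not divide $V_0=2$), and the fact that primes dividing $B$ divide no $U_k$ or $V_k$ with $k\geq 1$ since $\gcd(A,B)=1$. In short, your argument buys a self-contained deduction from the better-known first-kind statement, whereas the paper simply buys brevity by citing the second-kind statement verbatim.
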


This lemma can be used to prove the following result:

\begin{lemma}\label{lem:c0}
 Assume that $c=0$ and $A>1$. Then the Diophantine equation \eqref{eq:PillaiVn} has at most one solution $(n,m)$ with $n\geq 1$.
\end{lemma}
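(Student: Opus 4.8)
The plan is to argue by contradiction using the primitive divisor theorem (Lemma \ref{lem:Carmichael}). Suppose that \eqref{eq:PillaiVn} with $c=0$ has two solutions $(n_1,m_1)$ and $(n_2,m_2)$ with $n_1>n_2\geq 1$; then $V_{n_1}=b^{m_1}$ and $V_{n_2}=b^{m_2}$ are both powers of $b$ exceeding $1$, and in particular $n_1\geq 2$. The guiding idea is that a primitive divisor of the larger term $V_{n_1}$ must already divide $b$, hence also the smaller term $V_{n_2}$, contradicting primitivity.

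First I would dispose of the generic case $n_1\neq 3$. Since $n_1\geq 2$ we have $n_1\neq 1$, and the hypothesis $A>1$ rules out the exceptional pair $(A,B)=(1,-1)$, so Lemma \ref{lem:Carmichael} supplies a prime $p$ with $p\mid V_{n_1}$ but $p\nmid V_m$ for all $m<n_1$. From $V_{n_1}=b^{m_1}$ we get $p\mid b$, whence $p\mid b^{m_2}=V_{n_2}$; as $n_2<n_1$, this contradicts the primitivity of $p$.

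It remains to treat $n_1=3$, where Carmichael's theorem provides no primitive divisor; here $n_2\in\{1,2\}$ and I would use the explicit expressions $V_1=A$, $V_2=A^2-2B$ and $V_3=A(A^2-3B)$. If $n_2=2$, then $V_2=b^{m_2}$ still possesses a primitive divisor $q$ (since $2\neq 1,3$), so $q\mid b$ and $q\nmid V_1=A$; consequently $q\mid b^{m_1}=V_3=A(A^2-3B)$ forces $q\mid A^2-3B$, and subtracting this from $q\mid V_2=A^2-2B$ yields $q\mid B$ and then $q\mid A$, a contradiction. If $n_2=1$, then $A=b^{m_2}$ and $A^2-3B=b^{m_1-m_2}$; combining $\gcd(A,B)=1$ with $A$ being a power of $b$ and comparing the powers of $b$ on both sides will force $b$ to be a power of $3$ and, after invoking $A^2-4B>0$, pin down the only surviving candidate $A=3$, $B=2$. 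Since then $A^2-4B=1$ is a perfect square, the polynomial $X^2-AX+B$ would be reducible, contradicting our standing irreducibility assumption.

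I expect the case $n_1=3$ to be the only real obstacle: the main argument is a two-line application of Lemma \ref{lem:Carmichael}, but the gap at $n=3$ in the primitive divisor theorem must be closed by hand. The subcase $n_2=2$ reduces to a short divisibility computation, whereas $n_2=1$ is the delicate point, since there the primitive divisor machinery is unavailable for both terms and one must rely on the coprimality condition $\gcd(A,B)=1$ together with irreducibility to eliminate the single degenerate-looking exception.
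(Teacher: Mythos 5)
Your proposal is correct and follows essentially the same route as the paper: Carmichael's theorem (with $A>1$ excluding the exceptional pair) forces $n_1=3$, and the surviving case $n_1=3$, $n_2=1$ is killed by the explicit computation with $V_1=A$ and $V_3=A(A^2-3B)$, using $\gcd(A,B)=1$ to get $b=3$, then $A^2-4B>0$ to pin down $A=3$, $B=2$, which is inadmissible since $X^2-3X+2$ is reducible. Your divisibility argument eliminating $n_2=2$ is a welcome detail that the paper compresses into the bare assertion that $n_2=1$.
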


\begin{proof}
 Assume that \eqref{eq:PillaiVn} has two solutions $(n_1,m_1), (n_2,m_2)$ with $n_1>n_2\geq 1$. Then by Carmichael's primitive divisor theorem (Lemma \ref{lem:Carmichael}) we deduce that $n_1=3$ and $n_2=1$. Since $V_1=A$ and $V_3=A^3-3AB$ we obtain the system of equations
 \begin{equation*}
 	A=b^{m_2}, \qquad A^3-3AB=b^{m_1}
 \end{equation*}
 which yields
 \begin{equation*}
 	b^{2m_2}-3B=b^{m_1-m_2}.
 \end{equation*}
 That is $b\mid 3B$. Since we assume that $\gcd (A,B)=1$, we deduce $b\mid 3$, i.e.\ $b=3$. We also conclude that $3\nmid B$ which yields, by considering $3$-adic valuations, that $m_1-m_2=1$ since $m_2=0$ would imply $A=1$. Hence we have $B=3^{2m_2-1}-1$.
 
 Note that we also assume $A^2-4B>0$ which implies that
 \begin{equation*}
 	3^{2m_2}-4 (3^{2m_2-1}-1)= 4-3^{2m_2-1}>0
 \end{equation*}
 holds. But this is only possible for $m_2=1$ and we conclude that $A=3$ and $B=2$. Since $X^2-3X+2=(X-2)(X-1)$ is not irreducible, this is not an admissible case. Therefore there exists at most one solution to \eqref{eq:PillaiVn}.
\end{proof}

\section{Lower bounds for linear forms in logarithms}\label{sec:linforms}

The main tool in proving our main theorems are lower bounds for linear forms of logarithms of algebraic numbers. In particular, we will use Matveev's lower bound proven in \cite{Matveev:2000}. Therefore let $\eta\neq 0$ be an algebraic number of degree $\delta$ and let
\begin{equation*}
	a_{\delta}\left(X-\eta^{(1)}\right) \cdots \left(X-\eta^{(\delta)}\right) \in \Z[X]
\end{equation*}
be the minimal polynomial of $\eta$. Then the absolute logarithmic Weil height is defined by
\begin{equation*}
	h(\eta)=\frac 1\delta \left(\log |a_\delta|+\sum_{i=1}^\delta \max\{0,\log|\eta^{(i)}|\}\right).
\end{equation*}
With this notation the following result due to Matveev \cite{Matveev:2000} holds:

\begin{lemma}[Theorem~2.2 with $r=1$ in
  \cite{Matveev:2000}]\label{lem:matveev}
  Denote by $\eta_1, \ldots, \eta_N$ algebraic numbers, neither $0$ nor $1$,
  by $\log\eta_1, \ldots, \log\eta_N$ determinations of their logarithms,
  by $D$ the degree over $\Q$ of the number field $K =
  \Q(\eta_1,\ldots,\eta_N)$, and by $b_1, \ldots, b_N$ rational
  integers with $b_N\neq 0$. Furthermore let $\kappa=1$ if $K$ is real and $\kappa=2$
  otherwise. For all integers $j$ with $1\leq j\leq N$ choose
  \begin{equation*}
  	A_j\geq \max\{D h(\eta_j), |\log\eta_j|, 0.16\},
  \end{equation*}
  and set
  \begin{equation*}
  	E=\max\left\{\frac{|b_j| A_j}{A_N}\: :\: 1\leq j \leq N \right\}.
  \end{equation*}
  Assume that
  \begin{equation*}
    \Lambda:=b_1\log \eta_1+\cdots+b_N \log \eta_N \neq 0.
  \end{equation*}
  Then
  \begin{equation*}
    \log |\Lambda|
    \geq -C(N,\kappa)\max\{1,N/6\} C_0 W_0 D^2 \Omega
  \end{equation*}
  with
  \begin{gather*}
    \Omega=A_1\cdots A_N, \\
    C(N,\kappa)= \frac {16}{N!\, \kappa} e^N (2N +1+2 \kappa)(N+2)
    (4(N+1))^{N+1} \left( \frac 12 eN\right)^{\kappa}, \\
    C_0= \log\left(e^{4.4N+7}N^{5.5}D^2 \log(eD)\right),
    \quad W_0=\log(1.5eED \log(eD)).
  \end{gather*}
\end{lemma}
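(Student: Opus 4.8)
The statement is Matveev's explicit lower bound for a linear form in logarithms; it is not a result proved within this paper but a deep theorem from transcendence theory that the authors invoke, so I sketch the strategy by which such a bound is established rather than a self-contained argument. The proof belongs to the circle of ideas known as \emph{Baker's method}, and Matveev's specific contribution is to carry this method through with explicit and comparatively small constants. The plan is to argue contrapositively: assuming $\Lambda=b_1\log\eta_1+\dots+b_N\log\eta_N$ is nonzero yet extraordinarily small, one derives a contradiction, and optimizing the thresholds in that contradiction produces the stated numerical lower bound.

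First I would fix the analytic framework inside the number field $K=\Q(\eta_1,\dots,\eta_N)$ and consider the exponentials $z\mapsto\eta_j^{z}=\exp(z\log\eta_j)$. One then constructs an auxiliary function, a suitable combination of monomials $\eta_1^{\lambda_1 z}\cdots\eta_N^{\lambda_N z}$ with algebraic-integer coefficients produced by Siegel's lemma, arranged to vanish to high order on a grid of integer points. The parameters $A_j\geq\max\{Dh(\eta_j),|\log\eta_j|,0.16\}$ and $E$ appearing in the statement are precisely the quantities that calibrate the size of this grid and the heights of the data.

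The heart of the argument is a tension between an analytic upper bound and an arithmetic lower bound. On the analytic side, the assumed smallness of $\Lambda$ makes the relevant values nearly degenerate, so a Schwarz-type maximum-modulus (extrapolation) estimate forces the auxiliary function to be extremely small at many further points. On the arithmetic side, each such value is a nonzero algebraic number of controlled degree and height, so the Liouville inequality coming from the product formula bounds it from below. Comparing the two shows the function must vanish at more points than its order permits, contradicting a \emph{zero estimate} (non-vanishing lemma); in Matveev's treatment the required non-vanishing rests on a Kummer-theoretic argument about the multiplicative group generated by the $\eta_j$.

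The hard part, and precisely the locus of Matveev's improvement over Baker--W\"ustholz and earlier versions, is the explicit, polynomial-in-$N$ control of the constants $C(N,\kappa)$, $C_0$ and $W_0$. Each of the three steps---the Siegel construction, the maximum-modulus extrapolation, and the zero estimate---contributes numerical factors, and the delicate work is to track these so that the final constant is simultaneously explicit and small enough to be useful in Diophantine applications of the kind pursued later in this paper. I do not attempt to reproduce that calibration here; the role of the present paper is to \emph{apply} the bound, not to reprove it.
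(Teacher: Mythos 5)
The paper gives no proof of this lemma; it is quoted verbatim (as ``Theorem~2.2 with $r=1$'') from Matveev's paper and used as a black box, which is exactly how you treat it. Your identification of the statement as an imported transcendence result, together with your accurate outline of Baker's method (Siegel-lemma construction, extrapolation, zero estimate, Liouville lower bound), is consistent with the paper's approach, and no further detail is expected here.
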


In our applications we will be in the situation $N \in \{2,3\}$ and $K=\Q(\alpha)\subseteq \R$, i.e.\ we have $D=2$ and $ \kappa=1 $. In this special case Matveev's lower bounds take the following form:  

\begin{corollary}\label{cor:matveev-spec}
 Let the notations and assumptions of Lemma \ref{lem:matveev} be in force. Furthermore assume $D=2$ and $ \kappa=1 $. Then we have
 \begin{equation}\label{eq:linearform-bound}
 \begin{aligned}
  \log|\Lambda|&\geq - 7.26\cdot 10^{10}\log(13.81E)\Omega &\qquad \text{for $N=3$},\\
  \log|\Lambda|&\geq - 6.7\cdot 10^{8} \log(13.81E)\Omega &\qquad \text{for $N=2$}.\\
 \end{aligned}
 \end{equation}
\end{corollary}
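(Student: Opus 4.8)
The statement is a direct specialization of Lemma~\ref{lem:matveev}, so the plan is purely computational: substitute $D=2$ and $\kappa=1$ into each of the quantities $C(N,\kappa)$, $C_0$, and $W_0$, evaluate them for $N=2$ and $N=3$, and collect the resulting numerical constants, taking care throughout to round in the direction that keeps the lower bound for $\log|\Lambda|$ valid.

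First I would treat the factor $W_0$, the only one still depending on $E$. With $D=2$ it becomes $W_0=\log(1.5\,e\,E\cdot 2\log(2e))=\log(3e\log(2e)\,E)$. Since $3e\log(2e)=3e(1+\log 2)=13.809\ldots<13.81$ and $\log$ is increasing, we obtain $W_0\le\log(13.81\,E)$; replacing $W_0$ by this larger quantity only makes the bound more negative, so the substitution is legitimate. This explains the common factor $\log(13.81E)$ appearing in both displayed inequalities.

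Next I would lump the remaining $E$-independent constants into $K(N):=C(N,1)\,\max\{1,N/6\}\,C_0\,D^2$ with $D^2=4$, so that Matveev's bound reads $\log|\Lambda|\ge -K(N)\,W_0\,\Omega\ge -K(N)\log(13.81E)\,\Omega$. For fixed $N$ the factor $C(N,1)$ is an explicit rational multiple of a power of $e$, and with $D=2$ the quantity $C_0=\log\!\big(e^{4.4N+7}N^{5.5}\cdot 4\log(2e)\big)$ is a concrete number. For $N=3$ one has $\max\{1,N/6\}=1$, $C(3,1)=4e^4\cdot 9\cdot 5\cdot 16^4$, and $C_0=\log\!\big(e^{20.2}\,3^{5.5}\cdot 4\log(2e)\big)$; multiplying out and rounding up yields $K(3)<7.26\cdot 10^{10}$. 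For $N=2$ one has $\max\{1,N/6\}=1$, $C(2,1)=8e^3\cdot 7\cdot 4\cdot 12^3$, and $C_0=\log\!\big(e^{15.8}\,2^{5.5}\cdot 4\log(2e)\big)$, giving $K(2)<6.7\cdot 10^8$.

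No genuine obstacle arises; the only point demanding care is the rounding convention. Every numerical estimate must be an \emph{upper} bound for the corresponding factor (and $13.81$ an upper bound for $3e\log(2e)$), so that the product $K(N)\log(13.81E)$ overestimates the true right-hand side of Matveev's inequality, and hence the stated lower bounds for $\log|\Lambda|$ remain valid in both cases $N=2$ and $N=3$.
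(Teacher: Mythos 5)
Your proposal is correct and is exactly the computation the paper leaves implicit (the corollary is stated without proof): substituting $D=2$, $\kappa=1$ into Matveev's constants gives $3e\log(2e)=13.807\ldots<13.81$ for the factor inside $W_0$, and $C(N,1)\max\{1,N/6\}C_0D^2$ evaluates to about $7.25\cdot 10^{10}$ for $N=3$ and $6.69\cdot 10^8$ for $N=2$, both below the stated constants. Your care about rounding in the direction that preserves the lower bound is the right (and only) subtlety here.
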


\begin{remark}
 Let us note that the form of $E$ is essential in our proof to obtain an absolute bound for $n_1$ in Theorem \ref{th:n1-bound}. Let us also note that in the case of $N=2$ one could use the results of Laurent \cite{Laurent:2008} to obtain numerically better values but with an $\log(E)^2$ term instead. This would lead to numerically smaller upper bounds for concrete applications of our theorems. However, we refrain from the application of these results to keep our long and technical proof more concise.
\end{remark}

In order to apply Matveev's lower bounds, we provide some height computations. First, we note the following well known properties of the absolute logarithmic height for any $\eta, \gamma \in \overline{\Q}$ and $l \in \Q$
(see for example \cite[Chapter 3]{Zannier:DA} for a detailed reference):   
\begin{align*}
h(\eta \pm \gamma) & \leq h(\eta) + h(\gamma) + \log 2, \\
h(\eta \gamma) & \leq h(\eta) + h(\gamma),\\
h(\eta^l) & = |l| h(\eta).  
\end{align*}
Moreover, note that for a positive integer $b$ we have $h(b)=\log b$ and 
\begin{equation*}
	h(\alpha)=\frac 12 \left(\max\{0,\log \alpha\}+\max\{0,\log |\beta|\}\}\right)\leq \log \alpha.
\end{equation*}
This together with the above mentioned properties yields the following lemma:

\begin{lemma}\label{lem:heightcomp}
 Under the assumptions A1 or A2 and using the above notations from Lemma~\ref{lem:matveev} the following inequalites hold for any $t\in \Z_{>0}$:
 \begin{align*}
  2\log b & \geq \max\{Dh(b),|\log b|,0.16\},\\
  2\log \alpha & \geq \max\{Dh(\alpha),|\log \alpha|,0.16\},\\
  2(t+1)\log b & \geq \max\{Dh(b^t\pm 1),|\log (b^t\pm 1)|,0.16\},\\
  2(t+1)\log \alpha & \geq \max\{Dh(\alpha^t\pm 1),|\log (\alpha^t\pm 1)|,0.16\}.
 \end{align*}
\end{lemma}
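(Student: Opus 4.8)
The plan is to verify each of the four inequalities directly from the two height facts recorded just before the lemma, namely $h(b)=\log b$ and $h(\alpha)\leq\log\alpha$, together with the standing assumptions A1 and A2, which guarantee that both $b$ and $\alpha$ are bounded away from $1$. Throughout I would use $D=2$.

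First I would dispose of the two ``atomic'' inequalities. For $b$ we have $Dh(b)=2\log b$, while $b\geq 2$ gives $|\log b|=\log b\leq 2\log b$ and $2\log b\geq 2\log 2>0.16$; this settles the first line, with equality in the height term. For $\alpha$ the bound $h(\alpha)\leq\log\alpha$ yields $Dh(\alpha)\leq 2\log\alpha$, and since under A1 or A2 we have $\alpha>\tfrac{A}{2}>4$ by Lemma \ref{lem:alpha-A-est}, we get $|\log\alpha|=\log\alpha\leq 2\log\alpha$ and $2\log\alpha>2\log 4>0.16$, settling the second line.

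For the two inequalities involving $\pm 1$ I would apply the subadditivity and power rules for the Weil height to $b^t\pm 1$ and $\alpha^t\pm 1$. Using $h(\eta\pm\gamma)\leq h(\eta)+h(\gamma)+\log 2$ with $\gamma=1$, so that $h(1)=0$, together with $h(\eta^t)=t\,h(\eta)$, gives
\begin{equation*}
	h(b^t\pm 1)\leq t\log b+\log 2,\qquad h(\alpha^t\pm 1)\leq t\log\alpha+\log 2.
\end{equation*}
The decisive point is that the trailing $\log 2$ can be absorbed into one extra factor: since $b\geq 2$ we have $\log 2\leq\log b$, and since $\alpha>4$ under A1 or A2 we likewise have $\log 2\leq\log\alpha$. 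Hence $h(b^t\pm 1)\leq(t+1)\log b$ and $h(\alpha^t\pm 1)\leq(t+1)\log\alpha$, so the height terms are dominated by $2(t+1)\log b$ and $2(t+1)\log\alpha$, respectively.

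It then remains to control the $|\log(\cdot)|$ terms and the constant $0.16$, which is routine. Because $b\geq 2$ and $t\geq 1$ we have $b^t-1\geq 1$ and $b^t+1\leq 2b^t$, so $0\leq\log(b^t\pm 1)\leq\log 2+t\log b\leq(t+1)\log b$; the identical estimate works for $\alpha$ once we note $\alpha^t-1>1$ and $\alpha^t+1\leq 2\alpha^t$, using $\alpha>4$. Finally $2(t+1)\log b\geq 4\log 2>0.16$ and $2(t+1)\log\alpha\geq 4\log 4>0.16$. No step is a genuine obstacle; the only place where care is required, and the sole point at which the standing hypotheses are actually used, is the absorption of $\log 2$, which hinges on the lower bound $\alpha>2$ supplied by Lemma \ref{lem:alpha-A-est} under A1 and A2.
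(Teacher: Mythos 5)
Your proof is correct and follows essentially the same route the paper intends: the lemma is stated as an immediate consequence of the height properties and the bounds $h(b)=\log b$, $h(\alpha)\leq\log\alpha$ listed just before it, and your verification (subadditivity with the $\log 2$ term absorbed via $b\geq 2$ and $\alpha>4$ under A1 or A2) is exactly that argument made explicit.
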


One other important aspect in applying Matveev's result (Lemma \ref{lem:matveev}) is that the linear form $\Lambda$ does not vanish. We will resolve this issue with the following lemma:

\begin{lemma}
 Assume that the Diophantine equation \eqref{eq:PillaiVn} has three solutions $(n_1,m_1)$, $(n_2,m_2)$, $(n_3,m_3)\in \N^2$ with $n_1>n_2>n_3>0$. Then we have
 \begin{align*}
 \Lambda_i&:=n_i\log\alpha-m_i\log b\neq 0,& \text{for $i=1,2,3$}\\
 \Lambda'&:=n_2\log\alpha-m_2\log b+\log\left(\frac{\alpha^{n_1-n_2}-1}{b^{m_1-m_2}-1}\right)\neq 0,
 \end{align*}
\end{lemma}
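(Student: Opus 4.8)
The plan is to treat the two kinds of linear forms separately, since the non-vanishing of $\Lambda_i$ is essentially immediate while the non-vanishing of $\Lambda'$ requires exploiting the defining equations together with the conjugate root $\beta$.

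For the forms $\Lambda_i=n_i\log\alpha-m_i\log b$ I would observe that $\Lambda_i=0$ is equivalent to $\alpha^{n_i}=b^{m_i}$. Since $b^{m_i}$ is a positive integer, this would force $\alpha^{n_i}\in\Q$. But $\alpha$ is a root of the irreducible quadratic $X^2-AX+B$ with $A\neq 0$, so exactly as recorded in the remark following Corollary~\ref{cor:absolute}, a power $\alpha^{n_i}$ can be rational only when $n_i=0$ (the alternative $\alpha=\sqrt{D}$ being ruled out by $A\neq 0$). As $n_i\geq 1$ this is impossible, whence $\Lambda_i\neq 0$.

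For $\Lambda'$ I would start from the fact that $(n_1,m_1)$ and $(n_2,m_2)$ both solve \eqref{eq:PillaiVn}, so subtracting the two instances gives $V_{n_1}-V_{n_2}=b^{m_1}-b^{m_2}$. Writing $V_n=\alpha^n+\beta^n$ and factoring both sides yields $\alpha^{n_2}(\alpha^{n_1-n_2}-1)+\beta^{n_2}(\beta^{n_1-n_2}-1)=b^{m_2}(b^{m_1-m_2}-1)$. First I would note that $m_1>m_2$, so that $b^{m_1-m_2}-1>0$ and the logarithm in $\Lambda'$ is well defined; this follows from the strict monotonicity of $V_n$ proved in Lemmas~\ref{lem:increasing} and~\ref{lem:increasing-2} together with the subsequent corollary. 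Since $\alpha>1$ and $b>1$, every factor in sight is positive, so the logarithms may be combined to give $\Lambda'=\log\frac{\alpha^{n_2}(\alpha^{n_1-n_2}-1)}{b^{m_2}(b^{m_1-m_2}-1)}=\log\frac{\alpha^{n_1}-\alpha^{n_2}}{b^{m_1}-b^{m_2}}$.

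The crux is then the following observation: $\Lambda'=0$ holds if and only if $\alpha^{n_1}-\alpha^{n_2}=b^{m_1}-b^{m_2}$. However, $b^{m_1}-b^{m_2}=V_{n_1}-V_{n_2}=(\alpha^{n_1}-\alpha^{n_2})+(\beta^{n_1}-\beta^{n_2})$, so this equality is equivalent to $\beta^{n_1}=\beta^{n_2}$, i.e.\ to $\beta^{n_1-n_2}=1$ (note $\beta\neq 0$, since $B=0$ would render $X^2-AX+B$ reducible). As $\beta$ is real and $n_1-n_2\geq 1$, this forces $\beta\in\{1,-1\}$; either value is a rational root of $X^2-AX+B$, contradicting its irreducibility. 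Hence $\Lambda'\neq 0$. The main obstacle is precisely this last step: one has to recognize that once the logarithm is cleared, the two defining equations collapse the expression to the conjugate contribution $\beta^{n_1}-\beta^{n_2}$, whose vanishing is then excluded by irreducibility rather than by any size estimate.
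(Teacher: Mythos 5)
Your proposal is correct and follows essentially the same route as the paper: for $\Lambda_i$ the key step in both cases is that $\alpha^{n_i}=b^{m_i}$ together with Galois conjugation forces $\alpha^{n_i}=\beta^{n_i}$ and hence $|\alpha|=|\beta|$, contradicting non-degeneracy (the paper routes this through $\beta^{n_i}=c$, you through the rationality of $\alpha^{n_i}$, which is the same mechanism), and for $\Lambda'$ both arguments reduce to $\beta^{n_1}=\beta^{n_2}$ and exclude $\beta=0$ or $\beta$ a real root of unity via irreducibility. Your additional check that the logarithm in $\Lambda'$ is well defined is a harmless refinement, not a departure.
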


\begin{proof}
 Assume that $\Lambda_i=n_i\log\alpha-m_i\log b=0$ for some $i\in\{1,2,3\}$. But $\Lambda_i=0$ implies $\alpha^{n_i}-b^{m_i}=0$ which results in view of \eqref{eq:PillaiVn} in
 \begin{equation*}
 	\alpha^{n_i}+\beta^{n_i}-b^{m_i}=\beta^{n_i}=c.
 \end{equation*}
 Since $X^2-AX+B$ is irreducible, $\alpha$ and $\beta$ are Galois conjugated. Therefore, by applying the non-trivial automorphism of $K=\Q(\alpha)$ to the equation $\beta^{n_i}=c$, we obtain $\alpha^{n_i}=c$ since $c\in \Q$. But this implies $\beta^{n_i}=\alpha^{n_i}$, hence $|\alpha|=|\beta|$, a contradiction to our assumptions.

 Now, let us assume that
 \begin{equation*}
 	\Lambda'=n_2\log\alpha-m_2\log b+\log\left(\frac{\alpha^{n_1-n_2}-1}{b^{m_1-m_2}-1}\right)= 0.
 \end{equation*}
 This implies $\alpha^{n_1}-\alpha^{n_2}=b^{m_1}-b^{m_2}$ which results in view of \eqref{eq:Pillai-twosol} in $\beta^{n_1}=\beta^{n_2}$. But then $\beta=0$ or $ \beta $ is a root of unity. Both cases contradict our assumption that $X^2-AX+B$ is irreducible and $\beta \in \R$.
\end{proof}

Finally, we want to record three further elementary lemmata that will be helpful. The first lemma is a standard fact from real analysis.

\begin{lemma}\label{lem:log-est}
	If $|x-1|\leq 0.5$, then $|\log x| \leq 2 |x-1|$, and if $|x|\leq 0.5$, then we have
	\begin{equation*}
		\frac{2}{9} x^2 \leq x-\log (1+x) \leq 2 x^2.
	\end{equation*}
\end{lemma}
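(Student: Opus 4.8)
The plan is to treat the two assertions separately, each by an elementary calculus estimate. For the first inequality I would apply the mean value theorem to $\log$ on the interval between $1$ and $x$. Writing $\log x=\log x-\log 1=(x-1)/\xi$ for some $\xi$ strictly between $1$ and $x$, the hypothesis $|x-1|\leq 0.5$ forces $x\in[0.5,1.5]$ and hence $\xi\in(0.5,1.5)$, so that $1/\xi<2$. Taking absolute values yields $|\log x|=|x-1|/\xi\leq 2|x-1|$ at once.

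For the second assertion I would use the integral representation
\begin{equation*}
	x-\log(1+x)=\int_0^x \frac{t}{1+t}\,dt,
\end{equation*}
valid for all $x>-1$, which follows from the fundamental theorem of calculus since the integrand is the derivative of $t-\log(1+t)$ and the expression vanishes at $x=0$. The estimates then reduce to bounding $t/(1+t)$ over the range of integration, where $|t|\leq|x|\leq 0.5$ forces $1+t\in[0.5,1.5]$. I would split according to the sign of $x$. For $0\leq x\leq 0.5$ the integrand is nonnegative and satisfies $t/1.5\leq t/(1+t)\leq t$, so integrating gives $x^2/3\leq x-\log(1+x)\leq x^2/2$. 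For $-0.5\leq x<0$ I would substitute $t=-u$ to rewrite the expression as $\int_0^{|x|} u/(1-u)\,du$ with $1-u\in[0.5,1]$, whence $u\leq u/(1-u)\leq 2u$ and integration yields $x^2/2\leq x-\log(1+x)\leq x^2$.

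In both cases the resulting two-sided bounds lie comfortably inside the claimed window: the lower bounds $x^2/3$ and $x^2/2$ both exceed $\tfrac{2}{9}x^2$, while the upper bounds $x^2/2$ and $x^2$ are both at most $2x^2$. Combining the two sign cases with the trivial case $x=0$ completes the argument. I do not expect any genuine obstacle here; the only point requiring a little care is the sign bookkeeping in the negative range, where the integrand $t/(1+t)$ is itself negative but the integral $\int_0^x$ with $x<0$ is positive, and the substitution $t=-u$ makes this transparent.
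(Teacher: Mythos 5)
Your argument is correct, and it is essentially the route the paper intends: the paper's proof is the one-liner ``a direct application of Taylor's theorem with a Cauchy and Lagrange remainder, respectively,'' and your mean value theorem step is exactly the order-zero Lagrange form while your integral representation is the integral form of the first-order remainder. You simply supply the details the paper omits, and your sign-split even yields sharper constants ($\tfrac{1}{3}x^2$ and $x^2$ in place of $\tfrac{2}{9}x^2$ and $2x^2$) that comfortably imply the stated bounds.
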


\begin{proof}
 A direct application of Taylor's theorem with a Cauchy and Lagrange remainder, respectively.
\end{proof}

Next, we want to state another estimate from real analysis:

\begin{lemma}\label{lem:exp-est}
 Let $x,n\in \R$ such that $|2nx|<0.5$ and $n\geq 1$. Then we have
 \begin{equation*}
 	\left|(1+x)^n-1\right|\leq 2.6n|x|.
 \end{equation*}
\end{lemma}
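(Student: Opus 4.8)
The plan is to reduce the estimate to a bound on $e^y-1$ by writing the power in exponential form. Since the hypothesis $|2nx|<0.5$ together with $n\geq 1$ forces $|x|<\frac{1}{4n}\leq\frac14\leq 0.5$, we have $1+x>0$, so $(1+x)^n=e^{y}$ with $y:=n\log(1+x)$ is well defined even though $n$ need not be an integer. This is precisely why I would avoid the binomial theorem and instead pass through the logarithm.

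First I would control $|y|$. Applying Lemma~\ref{lem:log-est} to the admissible quantity $1+x$ (legitimate because $|x|\leq 0.5$) gives $|\log(1+x)|\leq 2|x|$, hence
\[
|y|=n\,|\log(1+x)|\leq 2n|x|<0.5,
\]
where the last inequality is exactly the hypothesis $|2nx|<0.5$.

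Next I would estimate $|e^y-1|$ in terms of $|y|$, in a way valid for both signs of $x$. From the power series one has $|e^y-1|\leq e^{|y|}-1$ for every real $y$. Since the function $t\mapsto(e^t-1)/t$ is increasing on $(0,\infty)$ (its derivative has numerator $e^t(t-1)+1$, which vanishes at $0$ and is itself increasing there, hence nonnegative), and since $|y|\leq 0.5$, I obtain
\[
|e^y-1|\leq\frac{e^{0.5}-1}{0.5}\,|y|\leq 2\bigl(e^{0.5}-1\bigr)\cdot 2n|x|<2.6\,n|x|,
\]
the final numerical step using $4(e^{0.5}-1)=2.5948\ldots<2.6$. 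This is exactly the asserted bound.

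There is no serious obstacle here; the only points requiring genuine care are that $n$ is real rather than integral, which is handled cleanly by the exponential representation $(1+x)^n=e^{n\log(1+x)}$, and the bookkeeping of the numerical constant, where one must verify that the monotonicity constant $4(e^{0.5}-1)$ stays strictly below $2.6$. Both are routine, so I expect the entire argument to be short.
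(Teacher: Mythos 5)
Your argument is correct and follows essentially the same route as the paper: both write $(1+x)^n=\exp(n\log(1+x))$, use Lemma~\ref{lem:log-est} to get $|n\log(1+x)|\leq 2n|x|<0.5$, and then bound $|e^y-1|$ linearly by $2(e^{0.5}-1)|y|\leq 1.3|y|$ via convexity of the exponential (the paper phrases this with two explicit chords for $y\gtrless 0$, you phrase it as monotonicity of $(e^t-1)/t$ combined with $|e^y-1|\leq e^{|y|}-1$, which is the same estimate). No gaps; the numerical constant $4(e^{0.5}-1)<2.6$ matches the paper's $2\cdot 1.3$.
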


\begin{proof}
 Since $e^y$ is a convex function, we have for $0\leq y <0.5$ that
 \begin{equation*}
 	e^y\leq 1+y \frac{e^{0.5}-1}{\frac 12}\leq 1+1.3 y,
 \end{equation*}
 and for $-0.5<y\leq 0$ we obtain
 \begin{equation*}
 	e^y\leq 1+y\frac{e^{-0.5}-1}{\frac 12}\leq 1-0.79y.
 \end{equation*}
 That is we have $|1-e^y|\leq 1.3 |y|$ for $|y|<0.5$. 
 
 Note that by our assumptions we have $|x|<0.5$ and therefore we obtain by an application of Lemma \ref{lem:log-est} 
 \begin{equation*}
 	\left|(1+x)^n-1\right|=\left|\exp(n\log(1+x))-1\right|\leq |\exp(2nx)-1|\leq 2.6n|x|.
 \end{equation*}
\end{proof}

The third lemma is due to Peth\H{o} and de Weger \cite{Pethoe:1986}.

\begin{lemma}\label{lem:log-sol}
Let $a,b \geq 0$, $h \geq 1$ and $x\in \R$ be the
largest solution of $x=a + b(\log x)^h$. If $b > (e^2/h)^h$, then
\begin{equation*}
	x<2^h\left(a^{1/h}+b^{1/h}\log (h^h b)\right)^h,
\end{equation*}
and if $b\leq (e^2/h)^h$, then
\begin{equation*}
	x\leq 2^h\left(a^{1/h}+2e^2 \right)^h.
\end{equation*}
\end{lemma}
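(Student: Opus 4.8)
The plan is to reduce the general exponent $h\ge 1$ to the case $h=1$ and then to locate the point where the defining relation reverses. The solution of interest has $x\ge 1$, so $\log x\ge 0$ (if $x<1$ the assertion is trivial, since both right-hand sides exceed $1$). In particular $x$ satisfies $x\le a+b(\log x)^h$. Because $h\ge 1$, the map $t\mapsto t^{1/h}$ is concave with value $0$ at the origin, hence subadditive on $[0,\infty)$, so that
\[
x^{1/h}=\bigl(a+b(\log x)^h\bigr)^{1/h}\le a^{1/h}+\bigl(b(\log x)^h\bigr)^{1/h}=a^{1/h}+b^{1/h}\log x .
\]
Substituting $z=x^{1/h}$, so that $\log x=h\log z$, this becomes
\[
z\le A+B\log z,\qquad A:=a^{1/h}\ge 0,\quad B:=hb^{1/h}\ge 0 .
\]
Under this substitution the dichotomy $b\gtrless(e^2/h)^h$ translates exactly into $B\gtrless e^2$, and, using $B^h=h^hb$ (whence $B\log B=b^{1/h}\log(h^hb)$), the two target bounds become $z<2\bigl(A+B\log B\bigr)$ and $z\le 2\bigl(A+2e^2\bigr)$. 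Raising the resulting bound on $z$ to the $h$-th power will then yield the two displayed estimates for $x=z^h$.

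It then remains to bound every $z>0$ with $z\le A+B\log z$, and here I would study $\phi(z):=A+B\log z-z$. This function is concave with $\phi'(z)=B/z-1$, hence strictly decreasing on $(B,\infty)$, so it suffices to exhibit a threshold $Z>B$ with $\phi(Z)<0$: for every $z\ge Z$ we then have $\phi(z)\le\phi(Z)<0$, so no such $z$ satisfies the inequality, forcing every solution below $Z$. In the case $B>e^2$ I would take $Z=2(A+B\log B)$, which satisfies $Z\ge 2B\log B>B$; writing $S=A+B\log B$ and $r=S/B=A/B+\log B\ge\log B>2$, the condition $\phi(Z)<0$ simplifies after elementary manipulation to $\log(2r)<r$, equivalently $r-\log r>\log 2$, which holds for all $r\ge 2$ since $r-\log r$ is increasing there with value $2-\log 2>\log 2$ at $r=2$. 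In the case $B\le e^2$ I would take $Z=2(A+2e^2)\ge 4e^2>e^2\ge B$; setting $T=2A+4e^2$, the inequality $\phi(Z)<0$ reduces (using $B\le e^2$) to $e^2\log T-\tfrac12 T<2e^2$, and the left-hand side is maximised at $T=2e^2$ with value $e^2(1+\log 2)<2e^2$, so it holds for all $T>0$.

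Finally I would translate back: from $z<Z$ and $x=z^h$ one gets $x<Z^h$, which equals $2^h\bigl(a^{1/h}+b^{1/h}\log(h^hb)\bigr)^h$ in the first case and $2^h\bigl(a^{1/h}+2e^2\bigr)^h$ in the second, via the identities $A=a^{1/h}$ and $B\log B=b^{1/h}\log(h^hb)$ recorded above. The genuinely delicate point is the choice of $Z$ in the regime $B>e^2$: a crude split such as ``$A\ge z/2$ or $B\log z\ge z/2$'' is too lossy and fails for $B$ just above $e^2$. What makes the argument work is the substitution $r=(A+B\log B)/B$, which collapses the verification to the single scale-invariant inequality $\log(2r)<r$ valid on the whole range $r\ge\log B>2$; arranging the factor $2$ in $Z$ so that this one-variable inequality closes uniformly is the step I expect to require the most care.
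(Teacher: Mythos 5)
The paper does not prove this lemma at all: it is quoted from Peth\H{o} and de Weger and the reader is referred to Appendix~B of Smart's book for a proof. So there is nothing in the paper to compare against, and the only question is whether your self-contained argument is sound --- it is. The reduction $x^{1/h}\le a^{1/h}+b^{1/h}\log x$ via subadditivity of $t\mapsto t^{1/h}$, the substitution $z=x^{1/h}$ turning the problem into $z\le A+B\log z$ with $A=a^{1/h}$, $B=hb^{1/h}$, and the identities $B>e^2\Leftrightarrow b>(e^2/h)^h$ and $B\log B=b^{1/h}\log(h^hb)$ all check out. The threshold argument is also correct: with $Z=2(A+B\log B)$ one computes $\phi(Z)=B(\log(2r)-r)$ for $r=(A+B\log B)/B>2$, which is negative since $r-\log r>\log 2$ on $[2,\infty)$; and with $Z=2(A+2e^2)$ and $B\le e^2$ one gets $\phi(Z)\le e^2\log T-\tfrac12T-2e^2\le e^2(\log 2-1)<0$ at the maximizer $T=2e^2$. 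Since $\phi$ is decreasing beyond $B<Z$, every solution satisfies $z<Z$, and raising to the $h$-th power gives exactly the two stated bounds (your strict inequality is even slightly stronger than the claimed $\le$ in the second case). Your closing remark is also apt: the naive split $z\le 2\max\{A,B\log z\}$ yields a weaker constant of the shape $4B\log(2B)$ rather than $2B\log B$, so working directly with the sign of $\phi$ at the target threshold is what recovers the constant as stated.
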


A proof of this lemma can be found in \cite[Appendix B]{Smart:DiGl}.

\section{A lower bound for $|c|$ in terms of $n_1$ and $\alpha$}\label{sec:c-low-bound}

The purpose of this section is to prove a lower bound for $|c|$. In particular we prove the following proposition:

\begin{proposition}\label{prop:c-low-bound}
 Assume that Assumption A1 or A2 holds and assume that Diophantine equation \eqref{eq:PillaiVn} has two solutions $(n_1,m_1)$ and $(n_2,m_2)$ with $n_1>n_2\geq N_0$. Then we have
 \begin{equation*}
 	|c|>\alpha^{n_1-K_0\log(27.62 n_1)}-|\beta|^{n_1}
 \end{equation*}
 with $K_0=2.69 \cdot 10^9 \log b$.
\end{proposition}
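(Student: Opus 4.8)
The plan is to extract a lower bound for $|c|$ from the defining equation by isolating $c$ and estimating the relevant pieces. Starting from Equation \eqref{eq:PillaiVn} written at the first solution, namely $V_{n_1}-b^{m_1}=c$, and recalling that $V_{n_1}=\alpha^{n_1}+\beta^{n_1}$, I would first write
\begin{equation*}
	|c|=|V_{n_1}-b^{m_1}|\geq |\alpha^{n_1}-b^{m_1}|-|\beta|^{n_1}.
\end{equation*}
Thus everything reduces to bounding the gap $|\alpha^{n_1}-b^{m_1}|$ from below. The natural way to do this is to rewrite it as $\alpha^{n_1}|1-b^{m_1}\alpha^{-n_1}|$ and to control the factor $|1-b^{m_1}\alpha^{-n_1}|$ via the linear form $\Lambda_1=n_1\log\alpha-m_1\log b$, using that $1-e^{-\Lambda_1}$ has absolute value comparable to $|\Lambda_1|$ when $\Lambda_1$ is small.

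The key step is therefore to produce a \emph{lower} bound for $|\Lambda_1|$. Here I would invoke Matveev's theorem in the $N=2$ form of Corollary \ref{cor:matveev-spec}, applied to the nonzero linear form $\Lambda_1=n_1\log\alpha-m_1\log b$; its nonvanishing is guaranteed by the lemma establishing $\Lambda_i\neq 0$. Using the height estimates of Lemma \ref{lem:heightcomp} I would take $A_1=2\log\alpha$ and $A_2=2\log b$, so that $\Omega=4\log\alpha\log b$. The parameter $E$ requires the quantity $\max\{|n_1|A_1/A_2,|m_1|A_2/A_2\}$; using Lemma \ref{lem:n1-m1-relation}, which gives $b^{m_1}/\alpha^{n_1}\in(3/8,5/2)$, one controls $m_1$ linearly in terms of $n_1\log\alpha/\log b$ (up to an additive constant), so that $E$ is bounded by an explicit constant times $n_1$. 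This is exactly where the $27.62\,n_1$ inside the logarithm and the constant $K_0=2.69\cdot 10^9\log b$ will come from: Matveev's bound reads
\begin{equation*}
	\log|\Lambda_1|\geq -6.7\cdot 10^8\log(13.81E)\cdot 4\log\alpha\log b,
\end{equation*}
and feeding in $E\ll n_1$ collapses the $\log(13.81E)$ into $\log(27.62\,n_1)$ while $6.7\cdot10^8\cdot4\log b$ assembles into $K_0$ after absorbing the factor $\log\alpha$ into the exponent.

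Concretely, once $\log|\Lambda_1|\geq -K_0\log(27.62\,n_1)\log\alpha$ is in hand, I would translate it back: since $|1-b^{m_1}\alpha^{-n_1}|$ is (up to the elementary estimate of Lemma \ref{lem:log-est}) at least a constant multiple of $|\Lambda_1|\geq \alpha^{-K_0\log(27.62n_1)}$, multiplying by $\alpha^{n_1}$ yields $|\alpha^{n_1}-b^{m_1}|\geq \alpha^{n_1-K_0\log(27.62n_1)}$, and subtracting $|\beta|^{n_1}$ gives precisely the claimed bound. The main obstacle I anticipate is the bookkeeping in the $E$-estimate: one must verify that $\Lambda_1$ is genuinely small (so that the passage between $|\Lambda_1|$ and $|1-e^{-\Lambda_1}|$ via Lemma \ref{lem:log-est} is legitimate) and that $m_1$ can be bounded in terms of $n_1$ with the correct explicit constants so that all numerical factors combine cleanly into $27.62$ and $K_0$. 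I would also need to keep track of the case distinction on whether the linear form is positive or negative so that the sign of the exponent $e^{-\Lambda_1}$ is handled correctly, but this is a routine matter of choosing which of $\alpha^{n_1},b^{m_1}$ is larger.
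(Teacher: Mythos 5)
Your proposal is correct and follows essentially the same route as the paper: the paper also reduces the claim to the linear form $m_1\log b-n_1\log\alpha$, applies Matveev with $N=2$, $A_1=2\log b$, $A_2=2\log\alpha$ and $E\le 2n_1$ (via Lemma \ref{lem:n1-m1-relation}), and passes between $|\Lambda_1|$ and $|b^{m_1}/\alpha^{n_1}-1|$ with Lemma \ref{lem:log-est}. The only difference is presentational: the paper argues by contradiction (assuming $(|c|+|\beta|^{n_1})/\alpha^{n_1}\le\alpha^{-K_0\log(27.62 n_1)}<\tfrac12$, which makes the smallness hypothesis of Lemma \ref{lem:log-est} automatic), whereas you run the argument forward, which needs the harmless extra case $|b^{m_1}/\alpha^{n_1}-1|>\tfrac12$ that you already flag.
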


\begin{proof}
Let us assume that, in contrary to the content of the proposition,
\begin{equation*}
	\frac{|c|+|\beta|^{n_1}}{\alpha^{n_1}}\leq \alpha^{-K_0\log (27.62 n_1)}<\frac{1}{2}.
\end{equation*}
We consider equation
\begin{equation*}
	\alpha^{n_1}+\beta^{n_1}-b^{m_1}=c
\end{equation*}
and obtain
\begin{equation*}
	\left|\frac{b^{m_1}}{\alpha^{n_1}}-1\right|\leq \frac{|c|+|\beta|^{n_1}}{\alpha^{n_1}}<\frac{1}{2}
\end{equation*}
which yields
\begin{equation*}
	\left|m_1\log b-n_1\log\alpha\right|\leq 2\frac{|c|+|\beta|^{n_1}}{\alpha^{n_1}}.
\end{equation*}

The goal is to apply Matveev's theorem (Lemma \ref{lem:matveev}) with $N=2$. Note that with $\eta_1=b$ and $\eta_2=\alpha$ we choose $A_1=2\log b$ and $A_2=2\log\alpha$, in view of Lemma~\ref{lem:heightcomp}, and obtain
\begin{equation*}
	E=\max\left\{\frac{m_1\log b}{\log \alpha},n_1\right\}.
\end{equation*}
Due to Lemma \ref{lem:n1-m1-relation} we have
\begin{equation*}
	\frac{m_1\log b}{\log \alpha}<n_1+\frac{\log(5/2)}{\log \alpha}<2n_1.
\end{equation*}
Therefore we obtain by Corollary \ref{cor:matveev-spec} that
\begin{align*}
2.68 \cdot 10^9 \log b\log \alpha \log(27.62 n_1) &\geq -\log\left|m_1\log b-n_1\log\alpha\right| \\
&\geq n_1\log \alpha - \log(|c|+|\beta|^{n_1}) -\log 2
\end{align*}
which implies the content of the proposition.
\end{proof}

\section{Bounds for $n_1$ in terms of $\log \alpha$}\label{sec:firstbound}

In this section we will assume that Assumption A1 or A2 holds. However, in the proofs we will mainly consider the case that Assumption A1 holds. Note that this is not a real restriction since Assumption A2 implies Assumption A1 with $\epsilon=\frac 12$ and $N_0=1$ instead of $N_0=\frac{3}{2\epsilon}$ (cf.\ Remark \ref{rem:A1-A2}). Also assume that Diophantine equation~\eqref{eq:PillaiVn} has three solutions $(n_1,m_1)$, $(n_2,m_2)$, $(n_3,m_3)$ with $n_1>n_2>n_3\geq N_0$. In this section we follow the approach of Chim et.\ al.\ \cite{Chim:2018} and prove upper bounds for $n_1$ in terms of $\alpha$. To obtain explicit bounds and to keep track of the dependence on $\log b$ and $\log \alpha$ of the bounds we repeat their proof. This section also delivers the set up for the later sections which provide proofs of our main theorems. Moreover, note that the assumption that three solutions exist, simplifies the proof of Chim et.\ al.\ \cite{Chim:2018}.

The main result of this section is the following statement:

\begin{proposition}\label{prop:n1bound-logalpha}
 Assume that Assumption A1 or A2 holds and that Diophantine equation \eqref{eq:PillaiVn} has three solutions $(n_1,m_1)$, $(n_2,m_2)$, $(n_3,m_3)$ with $n_1>n_2>n_3\geq N_0$. Then we have
 \begin{equation*}
 	n_1<2.58\cdot 10^{22} \frac{\log \alpha}{\epsilon} (\log b)^2\log\left(7.12\cdot 10^{23}\frac{\log \alpha}{\epsilon} (\log b)^2\right)^2,
 \end{equation*}
 where we choose $\epsilon=1/2$ in case that Assumption A2 holds.
\end{proposition}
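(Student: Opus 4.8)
The plan is to derive a linear form in logarithms from the first solution, bound it above using Equation \eqref{eq:PillaiVn}, bound it below using Matveev's theorem (Corollary \ref{cor:matveev-spec}), and then untangle the resulting inequality $n_1 \ll \log\alpha \cdot (\log b)^2 \cdot (\log n_1)^2$ via the Peth\H{o}--de Weger lemma (Lemma \ref{lem:log-sol}). The role of the three solutions, as noted in the text, is only to simplify the derivation of the upper bound on the linear form; the quantitative heavy lifting is done by a single solution together with the auxiliary estimates already established.

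First I would produce the \emph{upper} bound on a linear form. Starting from $V_{n_1}-b^{m_1}=c=V_{n_2}-b^{m_2}$, I would rewrite the equation as $\alpha^{n_1}+\beta^{n_1}-b^{m_1}=V_{n_2}-b^{m_2}$ and divide by $\alpha^{n_1}$. Using Lemma \ref{lem:n1-m1-relation} to control $b^{m_1}/\alpha^{n_1}$, Lemma \ref{lem:growth-Vn} to control $V_{n_2}\asymp\alpha^{n_2}$, and the estimate $|\beta|^{n_1}/\alpha^{n_1}\le 4^{-n_1}$ from the proof of Lemma \ref{lem:growth-Vn}, the right-hand side becomes a quantity that is exponentially small in $n_1-n_2$ (and in $n_1$). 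This yields
\begin{equation*}
 \left|\frac{b^{m_1}}{\alpha^{n_1}}-1\right| \ll \alpha^{-(n_1-n_2)},
\end{equation*}
and since the left side is at most $\tfrac12$, Lemma \ref{lem:log-est} converts this into
\begin{equation*}
 \bigl|\Lambda_1\bigr| = \bigl|m_1\log b - n_1\log\alpha\bigr| \ll \alpha^{-(n_1-n_2)},
\end{equation*}
so that $\log|\Lambda_1| \ll -(n_1-n_2)\log\alpha$. The exponent gap $n_1-n_2$ is what makes the upper bound genuinely small, and the non-vanishing of $\Lambda_1$ is guaranteed by the lemma preceding Corollary \ref{cor:matveev-spec}.

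Next I would produce the \emph{lower} bound on $\Lambda_1$ via Corollary \ref{cor:matveev-spec} with $N=2$, $\eta_1=b$, $\eta_2=\alpha$, taking $A_1=2\log b$, $A_2=2\log\alpha$ from Lemma \ref{lem:heightcomp} and $E\le 2n_1$ exactly as in the proof of Proposition \ref{prop:c-low-bound}. This gives $\log|\Lambda_1| \ge -6.7\cdot10^8\log(13.81E)\cdot 4\log b\log\alpha$, i.e.\ $\log|\Lambda_1|\gg -\log b\,\log\alpha\,\log(27.62 n_1)$. Combining with the upper bound produces $(n_1-n_2)\log\alpha \ll \log b\,\log\alpha\,\log(27.62 n_1)$, hence $n_1-n_2 \ll \log b\,\log(27.62 n_1)$. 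To close the argument I would repeat the same Matveev estimate on the form $\Lambda'$ (again non-vanishing by the same lemma), which relates $n_2$ and $m_2$ through the factor $(\alpha^{n_1-n_2}-1)/(b^{m_1-m_2}-1)$; here the heights of $\alpha^{n_1-n_2}\pm1$ and $b^{m_1-m_2}\pm1$ are controlled by the last two estimates of Lemma \ref{lem:heightcomp}, contributing factors of roughly $(n_1-n_2)\log\alpha$ and $(m_1-m_2)\log b$ to $\Omega$. Since $n_1-n_2$ is already bounded by $\ll\log b\log(27.62 n_1)$ from the first step, this feeds back a self-referential inequality of the shape $n_1 \ll \log\alpha\,(\log b)^2\,(\log n_1)^2$. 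The main obstacle, and the step demanding the most care, is bookkeeping the constants through this two-stage Matveev application so that the $\log b$ dependence comes out quadratic rather than cubic and the numerical constant $2.58\cdot10^{22}$ is actually achieved; the $\epsilon$ in the denominator enters through $N_0(\epsilon)$ and the choice $\epsilon=1/2$ under A2, and must be tracked consistently. Finally, writing the inequality as $n_1 = a + b_0(\log n_1)^2$ with $a,b_0$ of the indicated sizes and applying Lemma \ref{lem:log-sol} with $h=2$ yields the stated explicit bound on $n_1$.
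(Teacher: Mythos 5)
Your overall strategy is exactly the paper's: bound $\Lambda_1=m_1\log b-n_1\log\alpha$ above and below (Matveev with $N=2$) to get $n_1-n_2\ll\log b\log(27.62\,n_1)$ and then $m_1-m_2\ll\log\alpha\log(27.62\,n_1)$, run Matveev with $N=3$ on $\Lambda'$ with $\eta_3=\frac{b^{m_1-m_2}-1}{\alpha^{n_1-n_2}-1}$, whose height is now under control, and finish with Lemma \ref{lem:log-sol} with $h=2$. The paper also uses the third solution exactly where you indicate, namely to apply Lemma \ref{lem:n1-m1-relation} to $(n_2,m_2)$ so that $b^{m_2}/\alpha^{n_1}\leq\frac{5}{2}\alpha^{n_2-n_1}$.

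However, your first displayed inequality, $\left|\frac{b^{m_1}}{\alpha^{n_1}}-1\right|\ll\alpha^{-(n_1-n_2)}$, is false in general, and this is the one genuine gap. The conjugate contribution $(|\beta|^{n_1}+|\beta|^{n_2})/\alpha^{n_1}$ is only $O(\gamma^{-n_1})$ with $\gamma=\min\{\alpha,\alpha/|\beta|\}>\frac{1}{4}\alpha^{\epsilon}$; since $|\beta|$ may be as large as $2A^{1-\epsilon}$ under Assumption A1, the quantity $\gamma^{-n_1}\approx\alpha^{-\epsilon n_1}$ exceeds $\alpha^{-(n_1-n_2)}$ whenever $n_2<(1-\epsilon)n_1$, so the error is not always exponentially small in $n_1-n_2$. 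The paper therefore keeps $|\Lambda_1|\leq 14\max\{\alpha^{n_2-n_1},\gamma^{-n_1}\}$ and splits into two cases: when $\gamma^{-n_1}$ dominates, Matveev gives directly $n_1\ll\frac{\log b}{\epsilon}\log(27.62\,n_1)$ (an even stronger conclusion), and only in the other case does one obtain your bound on $n_1-n_2$. The same point matters at the second stage: there the upper bound is $|\Lambda'|\leq 8\gamma^{-n_1}$, and it is precisely $\log\gamma\geq\frac{\epsilon}{2}\log\alpha$ (from $A^{\epsilon}\geq 32$) that places $n_1\frac{\epsilon}{2}\log\alpha$ on the left of the final inequality and hence the factor $1/\epsilon$ into the bound --- not $N_0(\epsilon)$, as you suggest. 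With this case split and the correct identification of the small quantity bounding $\Lambda'$, your outline coincides with the paper's proof.
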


Since we assume the existence of two solutions, we have
\begin{equation*}
	V_{n_1}-b^{m_1}=c=V_{n_2}-b^{m_2}
\end{equation*}
and therefore obtain
\begin{equation}\label{eq:Pillai-twosol}
	\alpha^{n_1}+\beta^{n_1}-\alpha^{n_2}-\beta^{n_2}=b^{m_1}-b^{m_2}.
\end{equation}
Let us write $\gamma :=\min\{\alpha,\alpha/|\beta|\}$. Note that we have $\gamma>\frac{1}{2} A^\epsilon>\frac{1}{4} \alpha^\epsilon$, by Lemma~\ref{lem:bound-alpha-beta-case1} and Lemma~\ref{lem:alpha-A-est}. With this notation we get the inequality
\begin{equation*}
	\left|\frac{b^{m_1}}{\alpha^{n_1}}-1\right|\leq \alpha^{n_2-n_1}+\frac{b^{m_2}}{\alpha^{n_1}}+\frac{|\beta|^{n_1}+|\beta|^{n_2}}{\alpha^{n_1}}.
\end{equation*}
Note that, depending on whether $|\beta|>1$ or $|\beta|\leq 1$, we have
\begin{equation*}
	\frac{|\beta|^{n_1}+|\beta|^{n_2}}{\alpha^{n_1}}\leq
	\begin{dcases}
		\dfrac{2}{\alpha^{n_1}} \leq 2\gamma^{-n_1} &\quad \text{if $|\beta|\leq 1$} \\
		2 \cdot \dfrac{|\beta|^{n_1}}{\alpha^{n_1}} \leq 2\gamma^{-n_1} &\quad \text{if $|\beta|> 1$}
	\end{dcases}.
\end{equation*}
Therefore, using Lemma \ref{lem:n1-m1-relation}, we obtain
\begin{equation}\label{eq:L1-exp}
\begin{split}
\left|\frac{b^{m_1}}{\alpha^{n_1}}-1\right|&\leq \alpha^{n_2-n_1}+\frac{b^{m_2}}{\alpha^{n_1}}+2\gamma^{-n_1}\\
&\leq 2\max\left\{\frac 72 \alpha^{n_2-n_1},2\gamma^{-n_1}\right\}\\
&\leq 7\max\left\{\alpha^{n_2-n_1},\gamma^{-n_1}\right\}.
\end{split}
\end{equation}

First, let us assume that the maximum in \eqref{eq:L1-exp} is $\gamma^{-n_1}$. Under our assumptions we have $A^{\epsilon}\geq 32$ and $n_1\geq 3$, i.e.\ $7\gamma^{-n_1}<\frac{1}{2}$. Thus taking logarithms and applying Lemma \ref{lem:log-est} yields
\begin{equation*}
	|\underbrace{m_1\log b-n_1\log\alpha}_{=:\Lambda}| \leq 14\gamma^{-n_1}.
\end{equation*}
We apply Matveev's theorem (Lemma \ref{lem:matveev}) with $N=2$. Note that with $\eta_1=b$ and $\eta_2=\alpha$ we choose $A_1=2\log b$ and $A_2=2\log\alpha$, in view of Lemma \ref{lem:heightcomp}, to obtain
\begin{equation*}
	E=\max\left\{\frac{m_1\log b}{\log \alpha},n_1\right\}.
\end{equation*}
Note that, due to Lemma \ref{lem:n1-m1-relation},
\begin{equation*}
	\frac{m_1\log b}{\log \alpha}<n_1+\frac{\log(5/2)}{\log \alpha}<2n_1.
\end{equation*}
Therefore we obtain from Corollary \ref{cor:matveev-spec} that
\begin{align*}
	2.68 \cdot 10^9 \log b\log \alpha \log(27.62 n_1) &\geq -\log|\Lambda| \geq n_1 \log \gamma -\log 14 \\
	&\geq n_1 (\epsilon \log \alpha - \log 4) -\log 14 \\
	&\geq n_1 \frac{\epsilon}{2} \log \alpha -\log 14,
\end{align*}
where for the last inequality we used $ A^{\epsilon} \geq 32 $. Thus we have
\begin{equation*}
	5.38 \cdot 10^9 \frac{\log b}{\epsilon} \log(27.62 n_1)>n_1,
\end{equation*}
which, using Lemma \ref{lem:log-sol}, proves Proposition \ref{prop:n1bound-logalpha} in this case. Note that this also proves, in this specific case, Theorem \ref{th:n1-bound}.

Now we assume that the maximum in \eqref{eq:L1-exp} is $\alpha^{n_2-n_1}$. By our assumptions on $A$ we have $7\alpha^{n_2-n_1}<\frac{1}{2}$
and obtain, by Lemma \ref{lem:log-est},
\begin{equation*}
	|\underbrace{m_1\log b-n_1\log\alpha}_{=:\Lambda}|\leq 14\alpha^{n_2-n_1}.
\end{equation*}
As computed before, an application of Matveev's theorem yields
\begin{equation*}
	2.68 \cdot 10^9 \log b\log \alpha \log(27.62 n_1) \geq -\log|\Lambda|\geq (n_1-n_2) \log \alpha -\log 14
\end{equation*}
and therefore
\begin{equation}\label{eq:n1-n2-bound}
 2.69 \cdot 10^9 \log b \log(27.62 n_1)>n_1-n_2.
\end{equation}

For the rest of the proof of Proposition \ref{prop:n1bound-logalpha} we will assume that \eqref{eq:n1-n2-bound} holds. Since we assume that a third solution exists, the statement of Lemma \ref{lem:n1-m1-relation} also holds for $m_2$ and $n_2$ instead of $m_1$ and $n_1$. In particular we have
\begin{align*}
	m_1 &< n_1 \frac{\log \alpha}{\log b} + \frac{\log \frac{5}{2}}{\log b} \leq n_1 \frac{\log \alpha}{\log b} + 2\log \frac{5}{2} \\
	m_2 &> n_2 \frac{\log \alpha}{\log b} + \frac{\log \frac{3}{8}}{\log b} \geq n_2 \frac{\log \alpha}{\log b} + 2\log \frac{3}{8}
\end{align*}
which yields
\begin{equation}\label{eq:m1-m2-bound}
m_1-m_2< \frac{\log \alpha}{\log b}(n_1-n_2)+\log 49< 2.7 \cdot 10^9 \log \alpha \log(27.62 n_1).
\end{equation}

Let us rewrite Equation \eqref{eq:Pillai-twosol} again to obtain the inequality
\begin{equation*}
	\left|\frac{b^{m_2}}{\alpha^{n_2}} \cdot\frac{b^{m_1-m_2}-1}{\alpha^{n_1-n_2}-1}-1\right|\leq \frac{|\beta|^{n_1}+|\beta|^{n_2}}{\alpha^{n_1}-\alpha^{n_2}} \leq 4\gamma^{-n_1}.
\end{equation*}
As previously noted we have $4\gamma^{-n_1}<\frac{1}{2}$ and therefore we obtain
\begin{equation*}
	\left|\smash{\underbrace{m_2\log b- n_2\log \alpha + \log \left(\frac{b^{m_1-m_2}-1}{\alpha^{n_1-n_2}-1}\right)}_{=:\Lambda'}}
	\vphantom{\left(\frac{b^{m_1-m_2}-1}{\alpha^{n_1-n_2}-1}\right)}\right|
	\vphantom{\underbrace{m_2\log b- n_2\log \alpha + \log \left(\frac{b^{m_1-m_2}-1}{\alpha^{n_1-n_2}-1}\right)}_{=:\Lambda'}}
	\leq 8\gamma^{-n_1}.
\end{equation*}
We aim to apply Matveev's theorem to $\Lambda'$ with $\eta_3=\frac{b^{m_1-m_2}-1}{\alpha^{n_1-n_2}-1}$. Note that due to Lemma \ref{lem:heightcomp} and the properties of heights we obtain
\begin{align*}
\max\{D h(\eta_3), |\log\eta_3|, 0.16\}&\leq 2(m_1-m_2+1)\log b+2(n_1-n_2+1)\log \alpha\\
&\leq 1.1\cdot 10^{10} \log b\log \alpha \log(27.62 n_1) =:A_3.
\end{align*}
Thus we obtain $E\leq 2n_1$ as before, and from Matveev's theorem
\begin{equation*}
	3.2\cdot 10^{21} [\log b\log \alpha \log(27.62 n_1)]^2 \geq n_1 \log \gamma - \log 8 \geq n_1 \frac{\epsilon}{2} \log \alpha -\log 8,
\end{equation*}
which yields
\begin{equation}\label{eq:n1-bound}
6.42\cdot 10^{21} \frac{\log \alpha}{\epsilon} [\log b \log(27.62 n_1)]^2>n_1.
\end{equation}

If we put $n'=27.62 n_1$ and apply Lemma \ref{lem:log-sol} to \eqref{eq:n1-bound}, then we end up with
\begin{equation*}
	n'< 7.12\cdot 10^{23} \frac{\log \alpha}{\epsilon} (\log b)^2
	\log\left(7.12\cdot 10^{23} \frac{\log \alpha}{\epsilon}(\log b)^2\right)^2
\end{equation*}
which yields the content of Proposition \ref{prop:n1bound-logalpha}.

\section{Combining linear forms of logarithms}\label{sec:three-sol}

As done before, let us assume that Diophantine equation \eqref{eq:PillaiVn} has three solutions $(n_1,m_1)$, $(n_2,m_2)$, $(n_3,m_3)$ with $n_1>n_2>n_3\geq N_0$.
Again we assume that Assumption A1 or A2 holds.

Let us reconsider Inequality \eqref{eq:L1-exp} with $n_1,m_1,n_2,m_2$ replaced by $n_2,m_2,n_3,m_3$, respectively. Then we obtain 
\begin{equation}\label{eq:exp-L2}
\begin{split}
\left|\frac{b^{m_2}}{\alpha^{n_2}}-1\right|&\leq \alpha^{n_3-n_2}+\frac{b^{m_3}}{\alpha^{n_2}}+2\gamma^{-n_2}\\
&\leq 3 \max\left\{\alpha^{n_3-n_2},\frac{5}{2}b^{m_3-m_2},2\gamma^{-n_2}\right\}.
\end{split}
\end{equation}
Note that with the assumption that three solutions exist we can apply Lemma~\ref{lem:n1-m1-relation} to $\frac{b^{m_2}}{\alpha^{n_2}}$ but not to
$\frac{b^{m_3}}{\alpha^{n_3}}$.

Let us assume for the next paragraphs that
\begin{equation}\label{eq:max}
 M_0:=\max\left\{3\alpha^{n_3-n_2},\frac{15}{2}b^{m_3-m_2},6\gamma^{-n_2}, 7\alpha^{n_2-n_1},4\gamma^{-n_1}\right\}<\frac{1}{2}.
\end{equation}
Then, by applying Lemma \ref{lem:log-est} to \eqref{eq:L1-exp} and \eqref{eq:exp-L2}, we obtain the system of inequalities
\begin{align*}
 |\underbrace{m_1\log b-n_1\log \alpha}_{=:\Lambda_1}|&\leq \max\left\{14 \alpha^{n_2-n_1},8\gamma^{-n_1}\right\},\\
 |\underbrace{m_2\log b-n_2\log \alpha}_{=:\Lambda_2}|&\leq\max\left\{6\alpha^{n_3-n_2},15b^{m_3-m_2},12\gamma^{-n_2}\right\}.
\end{align*}
We eliminate the term $\log \alpha$ from these inequalities by considering $\Lambda_0=n_2\Lambda_1-n_1\Lambda_2$ and obtain the inequality
\begin{equation}\label{eq:LinForm-L0}
\begin{split}
 |\Lambda_0|&=|(n_2m_1-n_1m_2)\log b|\\
 &\leq \max\left\{12n_1 \alpha^{n_3-n_2},30 n_1 b^{m_3-m_2},24 n_1 \gamma^{-n_2}, 28 n_2\alpha^{n_2-n_1},16 n_2\gamma^{-n_1} \right\}.
 \end{split}
\end{equation}

Let us write $M$ for the maximum on the right hand side of \eqref{eq:LinForm-L0}. If $n_2m_1-n_1m_2\neq 0$, we obtain the inequality $\log b \leq M$. Since we will study the case that $n_2m_1-n_1m_2=0$ in Section \ref{sec:degenerate-case},
we will assume for the rest of this section that $n_2m_1-n_1m_2\neq 0$, i.e.\ we have $\log b \leq M$. Therefore we have to consider five different cases. In each case we want to find an upper bound for $\log \alpha$ if possible:

\begin{description}
 \item [The case $M=12n_1 \alpha^{n_3-n_2}$] In this case we get
 \begin{equation*}
 	\log \log b \leq \log (12n_1)-(n_2-n_3)\log \alpha
 \end{equation*}
 which yields
 \begin{equation*}
 	\log \alpha \leq \log \left(12n_1/\log b\right)<\log (17.4 n_1),
 \end{equation*}
 since we assume $b\geq 2$.
 \item [The case $M=30 n_1 b^{m_3-m_2}$] In this case we obtain
  \begin{equation*}
  	\log \log b \leq \log (30n_1)-(m_2-m_3)\log b
  \end{equation*}
  which yields
  \begin{equation*}
  	m_2-m_3 \leq \frac{\log (30n_1/\log b)}{\log b}<1.45 \log(43.3 n_1).
  \end{equation*}
  To obtain from this inequality a bound for $\log \alpha$ is not straight forward and we will deal with this case in Section \ref{sec:case-m2-m3}.
 \item [The case $M=24 n_1 \gamma^{-n_2}$] This case implies
 \begin{equation*}
 	\log \log b \leq \log (24 n_1)-n_2\log \gamma \leq \log (24 n_1)-n_2 \frac{\epsilon}{2} \log \alpha
 \end{equation*}
 and we obtain
 \begin{equation*}
 	\log \alpha \leq \frac{2\log \left(24n_1/\log b\right)}{\epsilon}<\frac{2\log (34.7 n_1)}{\epsilon}.
 \end{equation*}
 \item [The case $M=28 n_2\alpha^{n_2-n_1}$] By a similar computation as in the first case, we obtain in this case the inequality
 \begin{equation*}
 	\log \alpha <\log (40.4 n_2)< \log (40.4 n_1).
 \end{equation*}
 \item [The case $M=16 n_2\gamma^{-n_1}$] Almost the same computations as in the case that $M=24 n_1 \gamma^{-n_2}$ lead to
 \begin{equation*}
 	\log \alpha <\frac{2\log (23.1 n_1)}{\epsilon}.
 \end{equation*}
 \end{description}
 
 In the case that \eqref{eq:max} does not hold, i.e.\ that $M_0\geq 1/2$, we obtain by similar computations in each of the five possibilities the following inequalities:
 
 \begin{description}
 \item [The case $M_0=3 \alpha^{n_3-n_2}$] $\log \alpha \leq \log 6$;
 \item [The case $M_0=\frac{15}{2} b^{m_3-m_2}$] $m_2-m_3\leq 3$;
 \item [The case $M_0=6 \gamma^{-n_2}$] $\log\alpha \leq \frac{\log 144}{\epsilon}$;
 \item [The case $M_0=7 \alpha^{n_2-n_1}$] $\log \alpha \leq \log 14$;
 \item [The case $M_0=4 \gamma^{-n_1}$] $\log\alpha \leq \frac{\log 64}{\epsilon}$.
 \end{description}

 Let us recap what we have proven so far in the following lemma:
 
 \begin{lemma}\label{lem:three-cases}
  Assume that Assumption A1 or A2 holds and assume that Diophantine equation \eqref{eq:PillaiVn} has three solutions $(n_1,m_1)$, $(n_2,m_2)$, $(n_3,m_3)$ with $n_1>n_2>n_3\geq N_0$. Then one of the following three possibilities holds:
  \begin{enumerate}[label={\roman*)},leftmargin=*,widest=999]
   \item $n_2m_1-n_1m_2=0$;
   \item $m_2-m_3 < 1.45 \log(43.3 n_1)$;
   \item $\log \alpha <\frac{2\log (34.7 n_1)}{\epsilon}.$
  \end{enumerate}
 \end{lemma}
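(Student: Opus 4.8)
The plan is to consolidate the case analysis already carried out in this section and to verify that every branch lands in exactly one of the three listed alternatives. First I would isolate the two linear forms
\begin{equation*}
	\Lambda_1 = m_1\log b - n_1\log\alpha, \qquad \Lambda_2 = m_2\log b - n_2\log\alpha,
\end{equation*}
which arise from feeding the pair of solutions $(n_1,m_1),(n_2,m_2)$ into \eqref{eq:L1-exp} and the pair $(n_2,m_2),(n_3,m_3)$ into \eqref{eq:exp-L2}; here $\Lambda_1$ is bounded by a maximum of two terms and $\Lambda_2$ by a maximum of three. Eliminating $\log\alpha$ through $\Lambda_0 := n_2\Lambda_1 - n_1\Lambda_2 = (n_2m_1-n_1m_2)\log b$ and bounding the resulting sum by twice the maximum of its constituents yields \eqref{eq:LinForm-L0}, in which $|\Lambda_0|$ is dominated by the maximum $M$ of five explicit terms.

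Next I would branch on whether $n_2m_1-n_1m_2$ vanishes. If it does, alternative (i) holds and nothing further is needed. Otherwise $|\Lambda_0|\geq \log b$, so $\log b \leq M$, and I would run through the five possibilities for the term realizing $M$. The term $30n_1 b^{m_3-m_2}$ yields directly the bound $m_2-m_3 < 1.45\log(43.3 n_1)$ of alternative (ii), while the term $24n_1\gamma^{-n_2}$ yields exactly the bound $\log\alpha < \frac{2\log(34.7 n_1)}{\epsilon}$ of alternative (iii), using $\gamma > \frac14\alpha^{\epsilon}$ and $A^{\epsilon}\geq 32$ (which forces $\log\gamma \geq \frac{\epsilon}{2}\log\alpha$). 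For the remaining three terms, $12n_1\alpha^{n_3-n_2}$, $28n_2\alpha^{n_2-n_1}$ and $16n_2\gamma^{-n_1}$, I would derive bounds on $\log\alpha$ of the shape $\log(c\,n_1)$ or $\frac{2}{\epsilon}\log(c\,n_1)$ and then check that each is dominated by $\frac{2\log(34.7 n_1)}{\epsilon}$, so that all three collapse into alternative (iii).

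Finally, I would handle the complementary regime in which \eqref{eq:max} fails, that is $M_0\geq\frac12$; the same five-term split now produces the absolute bounds $\log\alpha\leq\log 6$, $m_2-m_3\leq 3$, $\log\alpha\leq\frac{\log 144}{\epsilon}$, $\log\alpha\leq\log 14$ and $\log\alpha\leq\frac{\log 64}{\epsilon}$, each of which is again subsumed by (ii) or (iii). The only genuine work, and hence the main obstacle, is the bookkeeping of constants: one must confirm the numerical domination inequalities, for instance $\log(40.4 n_1)\leq\frac{2\log(34.7 n_1)}{\epsilon}$ and $3\leq 1.45\log(43.3 n_1)$. The decisive inputs there are $\epsilon<1$, which gives $\frac{2}{\epsilon}>2$, together with the chain $n_1>n_2>n_3\geq N_0\geq 1$; everything else is a direct transcription of the inequalities established above.
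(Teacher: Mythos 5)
Your proposal is correct and follows essentially the same route as the paper: the same linear forms $\Lambda_1,\Lambda_2$, the same elimination $\Lambda_0=n_2\Lambda_1-n_1\Lambda_2$, the same five-case analysis of the maximum $M$ together with the complementary regime $M_0\geq\frac12$, and the same final observation that every resulting bound is subsumed by alternative (ii) or (iii). The numerical domination checks you flag (e.g.\ $\log(40.4\,n_1)\leq\frac{2}{\epsilon}\log(34.7\,n_1)$ via $\epsilon<1$ and $n_1\geq 1$) are exactly the bookkeeping the paper performs implicitly, and they all go through.
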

 
 Since we will deal with the first and second possiblity in the next sections, we close this section by proving that the last possibility implies Theorems \ref{th:n1-bound} and \ref{th:absolute}. Therefore let us plug in the upper bound for $\log \alpha$ into Inequality \eqref{eq:n1-bound} to obtain
 \begin{equation*}
 	1.29\cdot 10^{22} \left(\frac{\log b}{\epsilon}\right)^2 \log(34.7 n_1)^3>n_1.
 \end{equation*}
 Writing $n'=34.7 n_1$, this inequality turns into
 \begin{equation*}
 	4.48\cdot 10^{23} \left(\frac{\log b}{\epsilon}\right)^2 \log(n')^3>n'
 \end{equation*}
 and an application of Lemma \ref{lem:log-sol} implies
 \begin{equation*}
 	n_1 < 1.04 \cdot 10^{23} \left(\frac{\log b}{\epsilon}\right)^2
 	\log\left( 1.21 \cdot 10^{25} \left(\frac{\log b}{\epsilon}\right)^2\right)^3.
 \end{equation*}
Thus Theorem \ref{th:n1-bound} is proven in this case.

Now let us assume that Assumption A2 holds. By Remark \ref{rem:A1-A2} we get the bound
\begin{equation*}
	n_1<4.16 \cdot 10^{23} \left(\log b\right)^2
	\log\left( 4.84 \cdot 10^{25} \left(\log b\right)^2\right)^3.
\end{equation*}
If we insert our upper bound for $n_1$ into the upper bound for $\log \alpha$, we obtain
\begin{align*}
	\log A &< \log \alpha + \log 2 \leq 5 \log (34.7 n_1) \\
	&\leq 5 \log \left( 1.45 \cdot 10^{25} \left(\log b\right)^2
	\log\left( 4.84 \cdot 10^{25} \left(\log b\right)^2\right)^3 \right).
\end{align*}
This proves Theorem \ref{th:absolute} in the current case.

\section{The case $n_1m_2-n_2m_1=0$}\label{sec:degenerate-case}

We distinguish between the cases $c\geq 0$ and $c<0$.

\subsection{The case $c\geq 0$}

In this case we have
\begin{equation}\label{eq:c-alpha-n3-bound}
	0\leq c=V_{n_3}-b^{m_3}<\alpha^{n_3}+\beta^{n_3}<2\alpha^{n_3}.
\end{equation}
Furthermore it holds
\begin{equation*}
	\frac{b^{m_1}}{\alpha^{n_1}}=1+\frac{\beta^{n_1}-c}{\alpha^{n_1}}\qquad \text{ as well as } \qquad\frac{b^{m_2}}{\alpha^{n_2}}=1+\frac{\beta^{n_2}-c}{\alpha^{n_2}}.
\end{equation*}
Since 
\begin{equation*}
	\left|\frac{\beta^{n_2}-c}{\alpha^{n_2}}\right|\leq \left(\frac{|\beta|}{\alpha}\right)^{n_2}+2\alpha^{n_3-n_2}<\frac{1}{2}
\end{equation*}
holds under our assumptions, we may apply Lemma \ref{lem:log-est} to get the two inequalities
\begin{align*}
 \frac{2}{9} \left(\frac{\beta^{n_1}-c}{\alpha^{n_1}} \right)^2& \leq \frac{\beta^{n_1}-c}{\alpha^{n_1}} -\left(m_1\log b-n_1\log \alpha\right) \leq 2 \left(\frac{\beta^{n_1}-c}{\alpha^{n_1}} \right)^2,\\
 \frac{2}{9} \left(\frac{\beta^{n_2}-c}{\alpha^{n_2}} \right)^2& \leq \frac{\beta^{n_2}-c}{\alpha^{n_2}}-\left(m_2\log b-n_2\log \alpha\right) \leq 2 \left(\frac{\beta^{n_2}-c}{\alpha^{n_2}} \right)^2.
\end{align*}
Multiplying the first inequality by $n_2$ and the second one by $n_1$ as well as forming the difference afterwards yields
\begin{equation}\label{eq:log-est-for-alpha}
\begin{split}
 \frac{2n_2}{9} \left(\frac{\beta^{n_1}-c}{\alpha^{n_1}} \right)^2-2n_1\left(\frac{\beta^{n_2}-c}{\alpha^{n_2}} \right)^2
 &\leq n_2\frac{\beta^{n_1}-c}{\alpha^{n_1}}-n_1\frac{\beta^{n_2}-c}{\alpha^{n_2}} \\
 &\leq 2n_2\left(\frac{\beta^{n_1}-c}{\alpha^{n_1}} \right)^2- \frac{2n_1}{9} \left(\frac{\beta^{n_2}-c}{\alpha^{n_2}} \right)^2.
 \end{split}
\end{equation}
Let us note that $(a+b)^2\leq 4\max\{|a|^2,|b|^2\}$, and therefore we obtain
\begin{align*}
 c\left(\frac{n_1}{\alpha^{n_2}}-\frac{n_2}{\alpha^{n_1}}\right)&\leq \frac{n_2|\beta|^{n_1}}{\alpha^{n_1}}+\frac{n_1|\beta|^{n_2}}{\alpha^{n_2}}+8n_2\max\left\{\frac{|\beta|^{2n_1}}{\alpha^{2n_1}},\frac{c^2}{\alpha^{2n_1}}\right\}\\
 &\leq 10n_1\max\left\{\frac{|\beta|^{n_2}}{\alpha^{n_2}},\frac{c^2}{\alpha^{2n_2}} \right\}.
\end{align*}
Together with the estimate
\begin{equation*}
	\frac{n_1}{\alpha^{n_2}}-\frac{n_2}{\alpha^{n_1}}>\frac{n_1}{\alpha^{n_2}}\left(1-\frac{1}{\alpha}\right)>\frac 78 \cdot\frac{n_1}{\alpha^{n_2}}
\end{equation*}
this implies
\begin{equation}\label{eq:c-est-for-c-pos}
 c< 12\max\left\{|\beta|^{n_2},\frac{c^2}{\alpha^{n_2}}\right\}.
\end{equation}
Let us assume for the moment that the maximum is $\frac{c^2}{\alpha^{n_2}}$. Then we obtain
\begin{equation*}
	\alpha^{n_2}< 12 c < 24\alpha^{n_3}
\end{equation*}
which implies $\alpha<24$ and thus Theorem \ref{th:absolute}. Plugging in $\alpha<24$ in Proposition~\ref{prop:n1bound-logalpha} yields the content of Theorem \ref{th:n1-bound} in this case.

Therefore we assume now $c<12|\beta|^{n_2}$. By Proposition \ref{prop:c-low-bound} we obtain
\begin{equation*}
	\alpha^{n_1-K_0\log(27.62 n_1)}-|\beta|^{n_1}<c<12|\beta|^{n_2}
\end{equation*}
which yields
\begin{equation*}
	\left(n_1-K_0\log(27.62 n_1)\right) \log \alpha <\log 13 +n_1\max\{\log|\beta|,0\}.
\end{equation*}
Note that, due to our assumptions, we have the bound
\begin{equation*}
	\alpha^{1-\frac{\epsilon}{4}} \geq 8\alpha^{1-\epsilon} > 4A^{1-\epsilon} > \frac{2\alpha}{A^{\epsilon}}>|\beta|
\end{equation*}
which implies $(1-\frac{\epsilon}{4}) \log \alpha > \log |\beta|$.
Thus we get
\begin{equation*}
	n_1 \frac{\epsilon}{4} \log \alpha < K_0 \log \alpha \log(27.62 n_1)+\log 13
\end{equation*}
and
\begin{equation*}
	n_1< 1.08 \cdot 10^{10} \frac{\log b}{\epsilon} \log(27.62 n_1).
\end{equation*}
As previously, solving this inequality with the help of Lemma \ref{lem:log-sol} yields
\begin{equation}\label{eq:n1-bound-deg-case}
 n_1< 2.17 \cdot 10^{10} \frac{\log b}{\epsilon}\log\left(2.99 \cdot 10^{11} \frac{\log b}{\epsilon} \right)
\end{equation}
which proves Theorem \ref{th:n1-bound} in this case.

So we may now assume that Assumption A2 holds and the bound for $n_1$ with $\epsilon=\frac{1}{2}$ is valid. This yields
\begin{equation}\label{eq:n1-bound-deg-case-A2}
n_1< 4.34 \cdot 10^{10}\log b\log\left(5.98 \cdot 10^{11} \log b\right).
\end{equation}
Note that under Assumption A2 we have $|\beta|<2\kappa$, by Lemma \ref{lem:bound-alpha-beta-case2}. Hence the quantities $c,|\beta|^{n_1},|\beta|^{n_2}$ are bounded by absolute, effectively computable constants.

Let us consider the case $|c-\beta^{n_2}|\geq 2(c+|\beta|^{n_1})\alpha^{n_2-n_1}$.
Note that $ \beta^{n_2} \neq c $ by the usual Galois conjugation argument.
If $ c > \beta^{n_2} $, then \eqref{eq:log-est-for-alpha} gives us
\begin{align*}
 (2n_1-n_2)\frac{c+|\beta|^{n_1}}{\alpha^{n_1}} &\leq n_2\frac{\beta^{n_1}-c}{\alpha^{n_1}}-n_1\frac{\beta^{n_2}-c}{\alpha^{n_2}}
 \\
 &\leq 2n_2\left(\frac{\beta^{n_1}-c}{\alpha^{n_1}} \right)^2- \frac{2n_1}{9} \left(\frac{\beta^{n_2}-c}{\alpha^{n_2}} \right)^2
 \leq 2n_1\left(\frac{|\beta|^{n_1}+c}{\alpha^{n_1}} \right)^2
\end{align*}
which yields
\begin{equation*}
	\alpha^{n_1}\leq 2(|\beta|^{n_1}+c).
\end{equation*}
As $c<2\alpha^{n_3}$ (see Inequality \eqref{eq:c-alpha-n3-bound}) we obtain
\begin{equation*}
	0.2 \alpha^{n_1} < \alpha^{n_1}-2c \leq 2|\beta|^{n_1} < 2(2\kappa)^{n_1}
\end{equation*}
which yields $\alpha< 20 \kappa$.

If $ c < \beta^{n_2} $, then \eqref{eq:log-est-for-alpha} gives us
\begin{align*}
	\left(n_1-\frac{n_2}{2}\right)\frac{\beta^{n_2}-c}{\alpha^{n_2}} &\leq n_1\frac{\beta^{n_2}-c}{\alpha^{n_2}} - n_2\frac{\beta^{n_1}-c}{\alpha^{n_1}}
	\\
	&\leq 2n_1\left(\frac{\beta^{n_2}-c}{\alpha^{n_2}} \right)^2- \frac{2n_2}{9} \left(\frac{\beta^{n_1}-c}{\alpha^{n_1}} \right)^2
	\leq 2n_1\left(\frac{\beta^{n_2}-c}{\alpha^{n_2}} \right)^2
\end{align*}
which implies
\begin{equation*}
	1 \leq 4\cdot \frac{\beta^{n_2}-c}{\alpha^{n_2}} \leq 4\cdot \frac{|\beta|^{n_2}}{\alpha^{n_2}} \leq 4\cdot \frac{|\beta|}{\alpha} < 8 \kappa \alpha^{-1}
\end{equation*}
and hence $ \alpha < 8\kappa $.

Thus we may now assume $|c-\beta^{n_2}|< 2(c+|\beta|^{n_1})\alpha^{n_2-n_1}$. Let us note that under the assumption $\alpha>2(c+\max\{1,|\beta|\}^{n_1})$ we can deduce
\begin{equation*}
	\left|\frac{b^{m_3}}{\alpha^{n_3}}-1\right| \leq \frac{c+|\beta|^{n_3}}{\alpha^{n_3}}<\frac{1}{2},
\end{equation*}
and otherwise we would get the constant $ C_2 $ in Theorem \ref{th:absolute} (cf.\ the calculations below).
Then, by Lemma \ref{lem:log-est}, we get
\begin{equation*}
	|\underbrace{m_3\log b-n_3\log \alpha}_{=:\Lambda_3}| \leq \frac{2c+2|\beta|^{n_3}}{\alpha^{n_3}} \leq \frac{24|\beta|^{n_2}+2|\beta|^{n_3}}{\alpha^{n_3}} \leq \frac{26(2\kappa)^{n_1}}{\alpha^{n_3}}.
\end{equation*}
Recalling from the beginning of Section \ref{sec:three-sol} the bound
\begin{equation*}
	|\underbrace{m_1\log b-n_1\log \alpha}_{=:\Lambda_1}| \leq \max\left\{14 \alpha^{n_2-n_1},8\gamma^{-n_1}\right\},
\end{equation*}
we can again eliminate the term $\log \alpha$ from these inequalities by considering the form $\Lambda_0'=n_3\Lambda_1-n_1\Lambda_3$ and obtain the inequality
\begin{equation}\label{eq:LinForm-Lp0}
	\begin{split}
		|\Lambda_0'|&=|(n_3m_1-n_1m_3)\log b|\\
		&\leq \max\left\{52n_1(2\kappa)^{n_1} \alpha^{-n_3}, 28 n_3\alpha^{n_2-n_1},16 n_3\gamma^{-n_1} \right\} \\
		&\leq \max\left\{52n_1(2\kappa)^{n_1} \alpha^{-n_3}, 28 n_3\alpha^{n_2-n_1},16 n_3\alpha^{-n_1/4} \right\}.
	\end{split}
\end{equation}
If $ n_3m_1-n_1m_3 \neq 0 $, then we have
\begin{equation*}
	\log b \leq \max\left\{52n_1(2\kappa)^{n_1} \alpha^{-n_3}, 28 n_3\alpha^{n_2-n_1},16 n_3\alpha^{-n_1/4} \right\}
\end{equation*}
which yields $ \log \alpha \leq 5 + \log n_1 + n_1 \log(4\kappa) $, and together with the bound \eqref{eq:n1-bound-deg-case-A2} this gives us constant $ C_2 $ in Theorem \ref{th:absolute}.

Hence we can assume $ n_3m_1-n_1m_3 = 0 $ and replace in the discussion above $m_2$ by $m_3$ as well as $n_2$ by $n_3$ in order to get $|c-\beta^{n_3}|< 2(c+|\beta|^{n_1})\alpha^{n_3-n_1}$.
Thus altogether we obtain
\begin{equation}\label{eq:beta-bound}
|\beta|^{n_3}\left|\beta^{n_2-n_3}-1\right|=\left|\beta^{n_2}-\beta^{n_3}\right|<4(c+|\beta|^{n_1})\alpha^{n_2-n_1}.
\end{equation}

From \eqref{eq:beta-bound} we deduce that one of the two factors of the left hand side is smaller than $2\alpha^{\frac{n_2-n_1}2}\sqrt{c+|\beta|^{n_1}}$.
By thinking of constant $ C_2 $ in Theorem \ref{th:absolute}, we may assume $\alpha >64(c +|\beta|^{n_1})$. So we have $2\alpha^{\frac{n_2-n_1}2}\sqrt{c+|\beta|^{n_1}} < \frac{1}{4}$.
Let us first assume that
\begin{equation*}
	|\beta|^{n_3} < 2\alpha^{\frac{n_2-n_1}2}\sqrt{c+|\beta|^{n_1}}.
\end{equation*}
This implies $|\beta|^{n_3}< \frac 14$ and further
\begin{equation*}
	c< |\beta|^{n_3}+2(c+|\beta|^{n_1})\alpha^{n_3-n_1}<\frac 14 +\frac 18<1.
\end{equation*}
Therefore we have $c=0$. But Lemma \ref{lem:c0} states that there cannot be three solutions for $ c=0 $.

Now we may assume
\begin{equation*}
	\left|\beta^{n_2-n_3}-1\right| < 2\alpha^{\frac{n_2-n_1}2}\sqrt{c+|\beta|^{n_1}}.
\end{equation*}
Note that, in particular, we have $ \beta^{n_2-n_3} > 0 $, and the above inequality implies
\begin{equation*}
	1-2\alpha^{\frac{n_2-n_1}2}\sqrt{c+|\beta|^{n_1}} < |\beta|^{n_2-n_3} < 1+2\alpha^{\frac{n_2-n_1}2}\sqrt{c+|\beta|^{n_1}}.
\end{equation*}
Using the binomial series expansion for $ (1\pm x)^r $ with exponent $r=\frac{1}{n_2-n_3}$ yields
\begin{equation*}
	||\beta|-1|<2\alpha^{\frac{n_2-n_1}2}\sqrt{c+|\beta|^{n_1}}.
\end{equation*}
Assuming $\alpha>64n_2^2(c+|\beta|^{n_1})$, we obtain by an application of Lemma \ref{lem:exp-est} that
\begin{equation*}
	||\beta|^{n_2}-1| \leq 5.2 n_2 \alpha^{\frac{n_2-n_1}2}\sqrt{c+|\beta|^{n_1}}.
\end{equation*}
This together with our assumption $ |c-|\beta|^{n_2}| \leq |c-\beta^{n_2}|< 2(c+|\beta|^{n_1})\alpha^{n_2-n_1}$ gives us
\begin{equation*}
	|c-1|<5.2 n_2 \alpha^{\frac{n_2-n_1}2}\sqrt{c+|\beta|^{n_1}}+ 2(c+|\beta|^{n_1})\alpha^{n_2-n_1}<\frac 12
\end{equation*}
provided that
\begin{equation*}
	\alpha \geq 433 n_2^2(c+|\beta|^{n_1}).
\end{equation*}

Thus we may assume $c=1$ provided that $\alpha$ is large enough. But this also implies
\begin{equation*}
	|1-\beta^{n_3}|<2(1+|\beta|^{n_1})\alpha^{n_3-n_1}\leq 2(1+|\beta|^{n_1})\alpha^{-2}.
\end{equation*}
If $\beta^{n_3}<0$, we get
\begin{equation*}
	\alpha<\sqrt{2(1+|\beta|^{n_1})} \leq 2 (c+|\beta|^{n_1}).
\end{equation*}
Therefore we may assume $\beta^{n_3}=|\beta|^{n_3}$ is positive. Since for any real numbers $x>0$ and $n\geq 1$ we have $|1-x| \leq |1-x^n|$, we obtain from Lemma \ref{lem:beta-1-bound} together with Lemma \ref{lem:alpha-A-est}
\begin{equation*}
	\frac{2}{4\alpha+5}<\frac{2}{2A+5}\leq |1-|\beta||\leq |1-\beta^{n_3}|<2(c+|\beta|^{n_1})\alpha^{-2}.
\end{equation*}
Hence we get
\begin{equation*}
	\alpha<9(c+|\beta|^{n_1})
\end{equation*}
in this case.

Let us summarize what we have proven so far: In the case $c\geq 0$ under Assumption A2 we cannot have three solutions to \eqref{eq:PillaiVn} for
\begin{equation*}
	\alpha\geq 433 n_2^2(c+|\beta|^{n_1}).
\end{equation*}
So if there exist three solutions, then we have
\begin{equation*}
	\alpha < 433 n_2^2(c+|\beta|^{n_1}) < 5629 n_1^2(2\kappa)^{n_1}.
\end{equation*}
With \eqref{eq:n1-bound-deg-case-A2} this implies
\begin{align*}
\log A&<n_1\log (4\kappa)+2\log n_1+\log 11258\\
&< 4.35 \cdot 10^{10}\log (4\kappa) \log b\log\left(5.98 \cdot 10^{11} \log b\right) 
\end{align*}
and Theorem \ref{th:absolute} is proven in that case.

\subsection{The case $c< 0$}

The case $c<0$ can be treated with similar arguments. Therefore we will only point out the differences.

We start with the observation
\begin{equation*}
	0<|c|=-c=b^{m_3}-V_{n_3}<b^{m_3}
\end{equation*}
and write again
\begin{equation*}
	\frac{b^{m_1}}{\alpha^{n_1}}=1+\frac{\beta^{n_1}-c}{\alpha^{n_1}}\qquad \text{ as well as } \qquad\frac{b^{m_2}}{\alpha^{n_2}}=1+\frac{\beta^{n_2}-c}{\alpha^{n_2}}.
\end{equation*}
Note that, using Lemma \ref{lem:n1-m1-relation},
\begin{equation*}
	\left|\frac{\beta^{n_2}-c}{\alpha^{n_2}}\right|
	\leq \left(\frac{|\beta|}{\alpha}\right)^{n_2}+\frac{|c|}{\alpha^{n_2}}
	< \frac{1}{16} + \frac{b^{m_3}}{\alpha^{n_2}}
	< \frac{1}{16} + \frac{5}{2} b^{m_3-m_2} < \frac{1}{2}
\end{equation*}
holds under our assumptions if in addition $ m_2-m_3 \geq 2 $. The case $ m_2-m_3 =1 $ is included in the next section.
Thus we get again the inequality chain \eqref{eq:log-est-for-alpha} and furthermore the bound
\begin{equation*}
	|c|< 12\max\left\{|\beta|^{n_2},\frac{|c|^2}{\alpha^{n_2}}\right\}.
\end{equation*}

If the maximum is $ \frac{|c|^2}{\alpha^{n_2}} $, then we have
\begin{equation*}
	\frac{2}{5} b^{m_2} < \alpha^{n_2} < 12|c| < 12 b^{m_3}
\end{equation*}
which implies $ m_2-m_3 \leq 3 $. This will be handled in Section \ref{sec:case-m2-m3}.
Therefore we may now again assume $ |c| < 12|\beta|^{n_2} $.
In the same way as in the case $ c \geq 0 $ we obtain again the upper bound \eqref{eq:n1-bound-deg-case} proving Theorem \ref{th:n1-bound} also in the case $c<0$. Moreover, we get under Assumption A2 the same upper bound \eqref{eq:n1-bound-deg-case-A2} for $n_1$. In particular, the quantities $|c|,|\beta|^{n_1},|\beta|^{n_2}$ are bounded by absolute, effectively computable constants.

Apart from the special case
\begin{equation*}
	\alpha^{n_1} \leq 2(|\beta|^{n_1}+|c|)
\end{equation*}
which, by Lemma \ref{lem:n1-m1-relation}, yields
\begin{equation*}
	\frac{1}{15}\alpha^{n_1} < \frac{8}{45} b^{m_1} < \alpha^{n_1}-2b^{m_3} < \alpha^{n_1}-2|c| \leq 2|\beta|^{n_1} < 2(2\kappa)^{n_1}
\end{equation*}
and thus $ \alpha < 60 \kappa $,
as well as the consideration of $ c=-1 $ instead of $ c=1 $,
we essentially only have to replace some (not all!) occurrences of $ c $ by $ |c| $ in order to perform the analogous arguments as we used in the case $ c \geq 0 $.
Hence Theorem~\ref{th:absolute} is proven in this case as well.

Let us summarize what we have proven so far:

\begin{lemma}\label{lem:last-cases}
  Assume that Assumption A1 or A2 holds and assume that Diophantine equation \eqref{eq:PillaiVn} has three solutions $(n_1,m_1)$, $(n_2,m_2)$, $(n_3,m_3)$ with $n_1>n_2>n_3\geq N_0$. Then at least one of the following three possibilities holds:
  \begin{enumerate}[label={\roman*)},leftmargin=*,widest=999]
   \item Assumption A1 holds and
    \begin{equation*}
    	n_1 < 1.04 \cdot 10^{23} \left(\frac{\log b}{\epsilon}\right)^2
    	\log\left( 1.21 \cdot 10^{25} \left(\frac{\log b}{\epsilon}\right)^2\right)^3;
    \end{equation*}
   \item Assumption A2 holds and
    \begin{equation*}
    	\log A < 4.35 \cdot 10^{10}\log (4\kappa) \log b\log\left(5.98 \cdot 10^{11} \log b\right)
    \end{equation*}
	or
	\begin{equation*}
		\log A < 5 \log \left( 1.45 \cdot 10^{25} \left(\log b\right)^2
		\log\left( 4.84 \cdot 10^{25} \left(\log b\right)^2\right)^3 \right);
	\end{equation*}
   \item $m_2-m_3<1.45 \log(43.3 n_1)$.
  \end{enumerate}
 \end{lemma}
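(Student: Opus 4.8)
The plan is to assemble Lemma~\ref{lem:last-cases} from the trichotomy already recorded in Lemma~\ref{lem:three-cases} together with the case analysis carried out in the present section. Lemma~\ref{lem:three-cases} asserts that, under Assumption A1 or A2 and with three solutions present, one of the following holds: \textbf{(a)} $n_2m_1-n_1m_2=0$; \textbf{(b)} $m_2-m_3<1.45\log(43.3 n_1)$; or \textbf{(c)} $\log\alpha<\frac{2}{\epsilon}\log(34.7 n_1)$. Possibilities (b) and (c) are dispatched immediately: (b) is literally item~(iii) of the present lemma, so there is nothing to do; for (c) I would substitute the bound on $\log\alpha$ into Inequality~\eqref{eq:n1-bound} and solve the resulting transcendental inequality with Lemma~\ref{lem:log-sol}, which under A1 yields exactly the bound on $n_1$ claimed in item~(i), and under A2 (taking $\epsilon=1/2$, reinserting the $n_1$-bound into the $\log\alpha$-bound) yields the second bound on $\log A$ in item~(ii). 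Hence the entire substance sits in possibility (a).

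For (a) I would follow Section~\ref{sec:degenerate-case} and split on the sign of $c$. In the case $c\geq 0$, the idea is to apply Lemma~\ref{lem:log-est} to the forms $\Lambda_1$ and $\Lambda_2$, take the weighted difference (the first inequality multiplied by $n_2$, the second by $n_1$), and after bounding each term extract Inequality~\eqref{eq:c-est-for-c-pos}, namely $c<12\max\{|\beta|^{n_2},c^2/\alpha^{n_2}\}$. If the maximum is $c^2/\alpha^{n_2}$ then $\alpha<24$, which is small enough to give item~(ii) directly and, via Proposition~\ref{prop:n1bound-logalpha}, item~(i). Otherwise $c<12|\beta|^{n_2}$, and comparing this with the lower bound for $|c|$ from Proposition~\ref{prop:c-low-bound} produces the bound~\eqref{eq:n1-bound-deg-case} on $n_1$, i.e.\ item~(i); under A2 this specialises to~\eqref{eq:n1-bound-deg-case-A2}.

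A second round is then required under A2. Here I would split on the size of $|c-\beta^{n_2}|$, and in the remaining subcase eliminate $\log\alpha$ once more through the combined form $\Lambda_0'=n_3\Lambda_1-n_1\Lambda_3$ of~\eqref{eq:LinForm-Lp0}. If $n_3m_1-n_1m_3\neq 0$ this bounds $\log\alpha$ and hence, with~\eqref{eq:n1-bound-deg-case-A2}, gives the constant $C_2$ in item~(ii); if $n_3m_1-n_1m_3=0$ one iterates the dichotomy, uses the binomial expansion together with Lemma~\ref{lem:exp-est} to pin $c$ to the values $0$ or $1$, and rules these out via Lemma~\ref{lem:c0} (for $c=0$) and Lemma~\ref{lem:beta-1-bound} (for $c=1$), forcing $\alpha<433\,n_2^2(c+|\beta|^{n_1})$ and thus again $C_2$. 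The case $c<0$ runs in parallel, replacing $c$ by $|c|$ in the appropriate places and the value $c=1$ by $c=-1$; it terminates either in items~(i)/(ii) or in a situation with $m_2-m_3\leq 3$, which is subsumed by item~(iii).

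I expect the delicate part to be the second round under A2, where the iterated vanishing $n_3m_1-n_1m_3=0$ forces a near-equality of the shape $\beta^{n_2}\approx\beta^{n_3}$; controlling this through Lemma~\ref{lem:beta-1-bound} so as to exclude all but $c\in\{0,1\}$, while keeping every explicit constant coherent along the chain of estimates, is where the bookkeeping is most demanding.
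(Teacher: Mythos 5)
Your proposal reconstructs exactly the paper's own argument: Lemma \ref{lem:three-cases} supplies the trichotomy, possibility (c) is resolved at the end of Section \ref{sec:three-sol} by inserting the $\log\alpha$-bound into \eqref{eq:n1-bound}, and possibility (a) is handled by the full case analysis of Section \ref{sec:degenerate-case} (the dichotomy \eqref{eq:c-est-for-c-pos}, Proposition \ref{prop:c-low-bound}, the second round under A2 via $\Lambda_0'$, and the exclusion of $c\in\{0,\pm 1\}$ through Lemmata \ref{lem:c0} and \ref{lem:beta-1-bound}), with the leftover subcases $m_2-m_3\leq 3$ correctly absorbed into item~(iii). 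This matches the paper's proof in both structure and detail, so there is nothing to add.
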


\section{The case $m_2-m_3\ll \log n_1$}\label{sec:case-m2-m3}

In view of Theorems \ref{th:n1-bound} and \ref{th:absolute} and Lemma \ref{lem:last-cases} we may assume that Assumption A1 or A2 holds and that $m_2-m_3<1.45 \log(43.3 n_1)$.

First we reconsider inequality \eqref{eq:L1-exp} and note that $7\max\left\{\alpha^{n_2-n_1},\gamma^{-n_1}\right\}\geq \frac{1}{2}$ implies either $\alpha \leq 14$ or $A^{\epsilon} \leq 28$. In the first cases we have an upper bound for $\alpha$ and by Proposition \ref{prop:n1bound-logalpha} also an upper bound for $n_1$; the second case contradicts Assumption A1 and A2 respectively. Thus Theorems \ref{th:n1-bound} and \ref{th:absolute} are shown in those situations. Now we may apply Lemma \ref{lem:log-est} and obtain, as in Section \ref{sec:firstbound}, the inequality
\begin{equation}\label{eq:Lambda1}
|\underbrace{n_1\log\alpha-m_1\log b}_{=:\Lambda_1}| \leq 14 \max\left\{\alpha^{n_2-n_1},\gamma^{-n_1}\right\}.
\end{equation}

Next, let us consider the inequality
\begin{align*}
\left|\frac{\alpha^{n_2}}{b^{m_2}-b^{m_3}}-1\right|=\left|\frac{\alpha^{n_2}}{b^{m_3}(b^{m_2-m_3}-1)}-1\right|&\leq \frac{\alpha^{n_3}+|\beta|^{n_2}+|\beta|^{n_3}}{b^{m_2}-b^{m_3}}\\
&< 6\alpha^{n_3-n_2}+12\gamma^{-n_2}\\
&\leq 18\max\{\alpha^{n_3-n_2},\gamma^{-n_2}\}.
\end{align*}
In particular, note that $b^{m_2}-b^{m_3}\geq \frac{1}{2} b^{m_2} > \frac{3}{16} \alpha^{n_2}$ by Lemma \ref{lem:n1-m1-relation}.
Assuming that $18\alpha^{n_3-n_2} \geq \frac{1}{2}$ yields $\alpha \leq 36$ which implies by Proposition \ref{prop:n1bound-logalpha} Theorems \ref{th:n1-bound} and \ref{th:absolute}. Similarly, using $ n_2\geq 2 $, the assumption $18\gamma^{-n_2} \geq \frac{1}{2}$ gives us either $ \alpha \leq 6 $ or $A^{\epsilon} \leq 12$ and we are done as well. Thus we may apply Lemma \ref{lem:log-est} and obtain
\begin{equation}\label{eq:Lambda2}
|\underbrace{n_2\log\alpha-m_3\log b-\log(b^{m_2-m_3}-1)}_{=:\Lambda_2}| \leq 36\max\{\alpha^{n_3-n_2},\gamma^{-n_2}\}.
\end{equation}

Eliminating the term $\log \alpha$ from the linear forms $\Lambda_1$ and $\Lambda_2$ by considering $\Lambda=n_1\Lambda_2-n_2\Lambda_1$ yields together with \eqref{eq:Lambda1} and \eqref{eq:Lambda2} the bound
\begin{equation}\label{eq:Lambda}
\begin{split}
|\Lambda|&\leq 36n_1\max\{\alpha^{n_3-n_2},\gamma^{-n_2}\}+14n_2\max\{\alpha^{n_2-n_1},\gamma^{-n_1}\}\\
&\leq 50n_1\max\{\alpha^{n_3-n_2},\gamma^{-n_2},\alpha^{n_2-n_1}\}\\
&\leq 200n_1\alpha^{-\epsilon},
\end{split}
\end{equation}
where
\begin{equation*}
	\Lambda=(m_1n_2-m_3n_1)\log b-n_1\log(b^{m_2-m_3}-1).
\end{equation*}
Now we have to distinguish between the cases $\Lambda=0$ and $\Lambda\neq0$.

\subsection{The case $\Lambda= 0$}

Since $n_1\neq 0$ this case can only occur if $b$ and $b^{m_2-m_3}-1$ are multiplicatively dependent. This is only possible if $b=2$ and $m_2-m_3=1$, i.e.\ if $b^{m_2-m_3}-1=1$. Note that our assumptions imply $c\geq 0$ if $b=2$. Therefore we obtain
\begin{equation*}
	0\leq c =V_{n_3}-b^{m_3}=\alpha^{n_3}+\beta^{n_3}-b^{m_3}
\end{equation*}
which implies $b^{m_3}\leq 2\alpha^{n_3}$.

From Lemma \ref{lem:n1-m1-relation} we know that $b^{m_2}>\frac{3}{8} \alpha^{n_2}$. Hence, using the facts $b=2$ and $m_2=m_3+1$, we get the inequality
\begin{equation*}
	\frac{3}{8} \alpha^{n_2}<b^{m_2}=2b^{m_3}\leq 4\alpha^{n_3}
\end{equation*}
which implies $\alpha^{n_2-n_3}<11$ and thus $\alpha<11$. An application of Proposition~\ref{prop:n1bound-logalpha} yields Theorems \ref{th:n1-bound} and \ref{th:absolute}.

\begin{remark}
 Let us note that in the case $c<0$ the argument above does not work. This is the reason why we exclude $b=2$ if $c<0$.
\end{remark}

\subsection{The case $\Lambda\neq 0$}

Here we may apply Matveev's theorem, Lemma \ref{lem:matveev}, to $\Lambda$. Note that the case $b^{m_2-m_3}-1=1$ can be excluded by the previous subsection.

First, let us find an upper bound for $|m_1n_2-m_3n_1|$. We deduce from \eqref{eq:Lambda} the bound
\begin{align*}
	|m_1n_2-m_3n_1|\log b &\leq n_1\log(b^{m_2-m_3}-1) + 200n_1 \\
	&\leq n_1(m_2-m_3)\log b + 200n_1
\end{align*}
which implies
\begin{equation*}
	|m_1n_2-m_3n_1|\leq 90 n_1 \log(43.3 n_1).
\end{equation*}
Furthermore, using Lemma \ref{lem:heightcomp}, we have
\begin{align*}
	\max\{Dh(b^{m_2-m_3}- 1),|\log (b^{m_2-m_3}- 1)|,0.16\} &\leq 2(m_2-m_3+1)\log b \\
	&\leq 3.5 \log(43.3 n_1)\log b.
\end{align*}
Therefore we may choose $E=52 n_1$ in Lemma \ref{lem:matveev}. 

Now we obtain by Matveev's theorem
\begin{equation*}
	4.69\cdot 10^{9} \log(719 n_1)\log(43.3 n_1) (\log b)^2 \geq -\log|\Lambda|
\end{equation*}
and then
\begin{equation*}
	4.69\cdot 10^{9} [\log(719 n_1) \log b]^2 \geq -\log|\Lambda|.
\end{equation*}
Together with the upper bound for $|\Lambda|$ this yields
\begin{equation*}
	4.69\cdot 10^{9} [\log(719 n_1) \log b]^2 \geq \epsilon \log \alpha - \log (200n_1)
\end{equation*}
and thus
\begin{equation}\label{eq:alpha-bound-last-case}
\log \alpha < 4.7 \cdot 10^{9} \frac{1}{\epsilon}[\log(719 n_1) \log b]^2.
\end{equation}

Similar as in Section \ref{sec:three-sol} we plug in this upper bound for $\log \alpha$ into \eqref{eq:n1-bound} and obtain the inequality
\begin{equation*}
	719 n_1< 2.17\cdot 10^{34} \epsilon^{-2} [\log b \log(719 n_1)]^4.
\end{equation*}
Writing $n'=719 n_1$ and applying Lemma \ref{lem:log-sol} gives us an upper bound for $n'$ and in the sequel for $n_1$, namely
\begin{equation*}
	n_1 < 4.83\cdot 10^{32} \frac{(\log b)^4}{\epsilon^2} \log \left[5.56\cdot 10^{36} \frac{(\log b)^4}{\epsilon^2}\right]^4.
\end{equation*}
This concludes the proof of Theorem \ref{th:n1-bound}.

Now let us assume that Assumption A2 holds. Then we put $\epsilon=\frac{1}{2}$ and get
\begin{equation*}
	n_1< 1.94\cdot 10^{33}(\log b)^4\log\left[2.23\cdot 10^{37}(\log b)^4\right]^4,
\end{equation*}
in particular $n_1 < 1.2\cdot 10^{40}$ for $b=2$ (cf.\ Corollary \ref{cor:ex}). If we insert this upper bound into \eqref{eq:alpha-bound-last-case} with $\epsilon=\frac{1}{2}$, then we obtain
\begin{equation*}
	\log \alpha < 9.4\cdot 10^{9}\left[\log\left(1.4\cdot 10^{36}(\log b)^4\log\left[2.23\cdot 10^{37}(\log b)^4\right]^4\right) \log b\right]^2
\end{equation*}
which finally proves Theorem \ref{th:absolute}.

\bibliographystyle{abbrv}
\bibliography{Pillai}
\end{document}